\newcommand{\ds}{\displaystyle\sum}
\newcommand{\rat}{\mathbb{Q}}
\newcommand{\complex}{\mathbb{C}}
\newcommand{\nat}{\mathbb{N}}
\newcommand{\integ}{\mathbb{Z}}
\newcommand{\op}{\operatorname}
\newcommand{\Hom}{\operatorname{Hom}}
\newcommand{\Ext}{\operatorname{Ext}}
\newcommand{\GL}{\operatorname{GL}}
\newcommand{\SL}{\operatorname{SL}}
\newcommand{\Rep}{\operatorname{Rep}}
\newcommand{\SI}{\operatorname{SI}}
\newtheorem{theorem}{Theorem}[section]
\newtheorem{lemma}[theorem]{Lemma}
\newtheorem{proposition}[theorem]{Proposition}
\newtheorem{corollary}[theorem]{Corollary}
\newtheorem{conjecture}{Conjecture}
\theoremstyle{definition}
\newtheorem{remark}[theorem]{Remark}
\newtheorem{definition}[theorem]{Definition}
\newtheorem{notation}[theorem]{Notation}
\newtheorem{example}[theorem]{Example}
\def\presuper#1#2%
\begin{document}

\title{Singularities of zero sets of semi-invariants for quivers}
\author{Andr\'as Cristian L\H{o}rincz}
\date{}

\maketitle

\begin{abstract}
Let $Q$ be a quiver with dimension vector $\alpha$ prehomogeneous under the action of the product of general linear groups $\GL(\alpha)$ on the representation variety $\Rep(Q,\alpha)$. We study geometric properties of zero sets of semi-invariants of this space. It is known that for large numbers $N$, the nullcone in $\Rep(Q,N\cdot \alpha)$ becomes a complete intersection. First, we show that it also becomes reduced. Then, using Bernstein-Sato polynomials, we discuss some criteria for zero sets to have rational singularities. In particular, we show that for Dynkin quivers codimension $1$ orbit closures have rational singularities.
\end{abstract}


\vspace*{5mm}

\section*{Introduction}
\label{sec:intro}

The study of zero sets of semi-invariants for quivers has been initiated in \cite{chang}, and has been intensively investigated later in several articles. In particular \cite{zwara1} shows that the nullcone for prehomogeneous dimension vectors is an irreducible complete intersection if the dimension vector is not \lq\lq too small". Bounds have been given for tame quivers in \cite{zwara2}. In Section \ref{sec:redu} we state analogous results concerning whether the nullcone is reduced. We note that such questions have been investigated before outside the quiver setting (for example, see \cite{kraft}).  

In the first part of \ref{sec:redu} we translate Serre's criterion in the quiver setting, see Definition \ref{jacob}. The main results of this section are Theorem \ref{thm:reduc} and Theorem \ref{thm:dynk}. Clearly, results about the nullcone are valid for the zero set of an arbitrary set of fundamental semi-invariants.

In Section \ref{sec:bern} we use $b$-functions and Bernstein-Sato polynomials to give some results on whether zero sets have rational singularities (and in particular are normal). This is based on the calculation of $b$-functions (of several variables) from \cite{lol}. We note that zero sets often turn out to be orbit closures (see \cite{mater,mater2}) and the hypersurface case corresponds to codimension $1$ orbits if the dimension vector is not \lq\lq too small" (see \cite{openorb}). In the latter case the results are sharper and we prove Theorems \ref{thm:dynkin1} and \ref{thm:extend1} for tame quivers. For zero sets of more semi-invariants, we establish an elementary link between the $b$-function of several variables and the Bernstein-Sato polynomial of an ideal (Proposition \ref{prop:link}). Then we state Lemma \ref{lem:reduc} and exhibit how it can be used in Example \ref{ex:first}. Lastly, we state Theorem \ref{thm:tree} involving some special semi-invariants for tree quivers.

It is known that orbit closures of type $A,D$ quivers have rational singularities (see \cite{orb1,orb2}). The roots of $b$-functions are finer invariants that can be used to distinguish orbit closures. In particular, we see in Remark \ref{rem:sing} that orbit closures of type $A$ and $D$ are not smoothly equivalent.

\begin{notation}
As usual, $\nat$ will denote the set of all non-negative integers and $\rat_+$ (resp. $\rat_-$) the set of non-negative (resp. non-positive) rational numbers. 

For $a,b,d\in \integ, a\leq b$, we use the following notation in $\complex[s]$: 
$$[s]^d_{a,b}:=\prod_{i=a+1}^b \prod_{j=0}^{d-1} (ds+i+j).$$
If $a=0$, we sometimes write $[s]^d_{b}:=[s]^d_{0,b}$. Hence $[s]^{d}_{a,b}[s]^d_{a}=[s]^d_b$.

Now fix an $l$-tuple $\underline{m}=(m_1,\dots,m_r)\in \nat^r$. Then for any $r$-tuple $\underline{d}=(d_1,\dots,d_r)\in \nat^r$, we use the following notation in $\complex[s_1,\dots,s_r]$:
$$[s]^{\underline{d}}_{a,b}=\prod_{i=a+1}^b \prod_{j=0}^{d-1} (\underline{d}\cdot\underline{s}+i+j),$$
where $d=\underline{d}\cdot \underline{m}$, the usual dot product, and $\underline{s}=(s_1,\dots,s_r)$.
\end{notation}

\section{Preliminaries}
\label{sec:semi}

Let $k$ be an algebraically closed field. A quiver $Q$ is an oriented graph, i.e. a pair $Q=(Q_0,Q_1)$ formed by a finite set of vertices $Q_0=\{1,\dots,n\}$ and a finite set of arrows $Q_1$. An arrow $a$ has a head $ha$, and tail $ta$, that are elements in $Q_0$:

\[\xymatrix{
ta \ar[r]^{a} & ha
}\]

A representation $V$ of $Q$ is a family of finite dimensional vector spaces $\{V(x)\,|\, x\in Q_0\}$ together with linear maps $\{V(a) : V(ta)\to V(ha)\, | \, a\in Q_1\}$. The dimension vector $\underline{d} (V)\in \nat^{Q_0}$ of a representation $V$ is the tuple $\underline{d}(V):=(d_x)_{x\in Q_0}$, with $d_x=\dim V(x)$. A morphism $\phi:V\to W$ of two representations is a collection of linear maps $\phi = \{\phi(x) : V(x) \to W(x)\,| \,x\in Q_0\}$, with the property that for each $a\in Q_1$ we have $\phi(ha)V(a)=W(a)\phi(ta)$. Denote by $\Hom_Q(V,W)$ the vector space of morphisms of representations from $V$ to $W$. We say $Q$ is tame if it is of Dynkin or extended Dynkin type (for more on quivers cf. \cite{elements}).

We form the affine space of representations with dimension vector $\alpha\in \nat^{Q_0}$ by
$$\Rep(Q,\alpha):=\displaystyle\bigoplus_{a\in Q_1} \Hom(k^{\alpha_{ta}},k^{\alpha_{ha}}).$$
The group 
$$\GL(\alpha):= \prod_{x\in Q_0} \GL(\alpha_x)$$
acts on $\Rep(Q,\alpha)$ in the obvious way. Furthermore, we also consider the subgroup $\SL(\alpha):= \prod_{x\in Q_0} \SL(\alpha_x)$.

\vspace{0.1in}

Under the action $\GL(\alpha)$ two elements lie in the same orbit iff they are isomorphic as representations. 

For two vectors $\alpha, \beta\in \integ^{Q_0}$, we define the Euler product
$$\langle \alpha, \beta \rangle = \ds_{x\in Q_0} \alpha_x \beta_x - \ds_{a\in Q_1} \alpha_{ta} \beta_{ha}.$$
For any two representations $V$ and $W$, we have the following exact sequence:

\begin{equation}\label{eq:ringel}
\begin{array}{rlc}
0 \to \Hom_Q (V,W) \stackrel{i}{\longrightarrow} \displaystyle\bigoplus_{x \in Q_0}& \!\!\!\!\!\Hom(V(x),W(x)) & \\
& \stackrel{d^V_W}{\longrightarrow}  \displaystyle\bigoplus_{a\in Q_1} \Hom(V(ta),W(ha)) \stackrel{p}{\longrightarrow} \Ext_Q(V,W)\to 0 &
\end{array}
\end{equation}

Here, the map $i$ is the inclusion, $d_W^V$ is given by
$$\{\phi(x)\}_{x\in Q_0} \mapsto \{\phi(ha)V(a) - W(a)\phi(ta)\}_{a\in Q_1}$$
and the map $p$ builds an extension of $V$ and $W$ by adding the maps $V(ta)\to W(ha)$ to the direct sum $V\oplus W$.

The exact sequence (\ref{eq:ringel}) gives us $\langle \underline{d}(V),\underline{d}(W) \rangle = \dim \Hom(V,W) - \dim \Ext(V,W)$.

For a geometric interpretation of this sequence, take $V=W$. Then $\bigoplus_{x \in Q_0} \Hom(V(x),V(x))\cong \mathfrak{gl}(\underline{d}(V))$, the Lie algebra of $\GL(\underline{d}(V))$. The map $d^V_V$ is the differential at the identity of the orbit map
$$g\mapsto g\cdot V \in \Rep(Q,\underline{d}(V)).$$
Also, $\Hom_Q(V,V)\cong \mathfrak{gl}_V(\underline{d}(V))$, the isotropy subalgebra of $\mathfrak{gl}(\underline{d}(V))$ at $V$, and we have a natural $\op{Aut}_Q(V)$-equivariant identification of the normal space
$$\Ext_Q(V,V)\cong \Rep(Q,\underline{d}(V))/T_V(\mathcal{O}_V),$$
where $\mathcal{O}_V$ is the orbit of $V$. In particular, $\mathcal{O}_V$ is dense iff $\Ext_Q(V,V)=0$. In this case we say $\underline{d}(V)$ is a \itshape prehomogeneous dimension vector \normalfont and $V$ is the \itshape generic representation\normalfont.

\vspace{0.1in}

We call a polynomial $f\in k[\Rep(Q,\alpha)]$ a semi-invariant, if there is a character $\sigma\in \Hom(\GL(\alpha),k^{\times})$ such that $g\cdot f = \sigma(g) f$. In this case we say the weight of $f$ is $\sigma$. We form the ring of semi-invariants

$$\SI(Q,\alpha)=\bigoplus_{\sigma} \SI(Q,\alpha)_{\sigma}=k[\Rep(Q,\alpha)]^{\SL(\alpha)},$$
where the sum runs over all characters $\sigma$ and the weight spaces are
$$\SI(Q,\alpha)_{\sigma}=\{f\in k[\Rep(Q,\alpha)]\, : \, f \text{ is a semi-invariant of weight } \sigma\}.$$

We investigate the geometry of the \itshape nullcone \normalfont for the action of $\SL(\alpha)$, that is, the the set of common zeros of all semi-invariants of positive degree:

$$\mathcal{Z}(Q,\alpha)=\{X\in \Rep(Q,\alpha)\, : \, f(X)=0, \text{ for all non-constant } f\in \SI(Q,\alpha)\}.$$

We define an important class of determinantal semi-invariants, first constructed by Schofield \cite{scho}. Fix two dimension vectors $\alpha,\beta$, such that $\langle \alpha, \beta \rangle=0$. We define the semi-invariant $c$ of the action of $\GL(\alpha)\times\GL(\beta)$ on $\Rep(Q,\alpha)\times \Rep(Q,\beta)$ by $c(V,W):=\det d^V_W$. Note that we have
$$c(V,W)= 0 \Leftrightarrow \Hom_Q(V,W)\neq 0 \Leftrightarrow \Ext_Q(V,W)\neq 0 $$
Next, for a fixed $V$, restricting $c$ to $\{V\} \times \Rep(Q,\beta)$ defines a semi-invariant $c^V\in \op{SI}(Q,\beta)$. Similarly, for a fixed $W$, restricting $c$ to $\Rep(Q,\alpha)\times \{W\}$, we get a semi-invariant $c_W\in \op{SI}(Q,\alpha)$. 

\vspace{0.1in}

Throughout we assume that $Q$ is without oriented cycles and $\alpha$ is a \itshape prehomogeneous dimension vector\normalfont. Without loss of generality, we assume $\alpha$ is a sincere dimension vector, that is, the dimension at each vertex is positive. Denote by $T$ the generic representation, and write $T=T_1^{\oplus\lambda_1}\oplus \dots \oplus T_m^{\oplus\lambda_m}$, where the $T_i$ are pairwise non-isomorphic direct summands.

\vspace{0.05in}

We denote by $T^\perp$ the right perpendicular category of $T$, that is, the full subcategory of $\Rep(Q)$ consisting of objects $Y$ that satisfy

$$\Hom(T,Y)=\Ext(T,Y)=0.$$

By \cite[Theorem 2.5]{scho}, $T^\perp$ is equivalent to the category of representations of a quiver $Q^\perp$ without oriented cycles and with $n-m$ vertices
$$T^\perp \xrightarrow{\,\sim\,} \Rep(Q^\perp).$$

We denote the simple objects in $T^\perp$ by $S_{m+1},\dots,S_{n}$. We have the following:

\begin{theorem}[{\cite[Theorem 4.3]{scho}}]
\label{thm:schosemi}
The semi-invariants $c_{S_j}$, $j=m+1,\dots, n$, are algebraically independent generators of the ring $\SI(Q,\alpha)$.
\end{theorem}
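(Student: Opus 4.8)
The plan is to pin down the abstract structure of $\SI(Q,\alpha)$ using prehomogeneity, and then to identify its generators with the $c_{S_j}$ by tracking weights through the perpendicular category $T^\perp$. Since $\alpha$ is prehomogeneous, $\mathcal{O}_T$ is dense and open in $\Rep(Q,\alpha)$; let $D_1,\dots,D_k$ be the codimension-one irreducible components of its complement. A non-constant semi-invariant does not vanish on $\mathcal{O}_T$, so its zero locus is a union of some of the $D_i$; as $\Rep(Q,\alpha)$ is a polynomial ring, each $D_i=V(c_i)$ for an irreducible polynomial $c_i$, which is a semi-invariant since $I(D_i)$ is stable under the connected group $\GL(\alpha)$, and every semi-invariant is a non-zero scalar times a monomial in the $c_i$. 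Hence $\SI(Q,\alpha)=k[c_1,\dots,c_k]$. Evaluation at $T$ is injective on each weight space, so $\dim\SI(Q,\alpha)_\sigma\le 1$ for every $\sigma$; this forces the weights $w_i:=\op{wt}(c_i)$ to be $\integ$-linearly independent (otherwise two distinct monomials in the $c_i$ would share a weight, hence be proportional, contradicting unique factorization), and linear independence of the $w_i$ yields algebraic independence of the $c_i$. Thus $\SI(Q,\alpha)$ is a polynomial ring on $c_1,\dots,c_k$ with $\integ$-independent weights, and it remains to match the $c_i$ with the $c_{S_j}$ up to non-zero scalars.

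For $j\in\{m+1,\dots,n\}$, the inclusion $S_j\in T^\perp$ gives $\Hom_Q(T,S_j)=\Ext_Q(T,S_j)=0$, so by (\ref{eq:ringel}) and $\underline{d}(T)=\alpha$ we have $\langle\alpha,\underline{d}(S_j)\rangle=0$; hence $c_{S_j}\in\SI(Q,\alpha)$ is defined, $d^T_{S_j}$ is an isomorphism, and $c_{S_j}(T)=\det d^T_{S_j}\ne 0$, so $c_{S_j}$ is a non-constant semi-invariant whose weight equals, up to sign and the standard identifications, the image of $\underline{d}(S_j)$ under the isomorphism $\integ^{Q_0}\xrightarrow{\sim}X^*(\GL(\alpha))$ induced by the Euler form (an isomorphism because $Q$ is acyclic). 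The classes $\underline{d}(S_{m+1}),\dots,\underline{d}(S_n)$ are those of the simple objects of $T^\perp\cong\Rep(Q^\perp)$, hence $\integ$-independent, so the weights $w_{S_j}$ are $\integ$-independent and $n-m\le k$. Conversely, the weight of any non-constant semi-invariant is trivial on the generic stabilizer $\Aut_Q(T)$, whose image in the torus $\GL(\alpha)/\SL(\alpha)$ has dimension equal to the rank of the matrix $(\dim T_i(x))_{i\le m,\ x\in Q_0}$, namely $m$, because the $\underline{d}(T_i)$ are $\integ$-independent; so the $w_i$ lie in a sublattice of rank $n-m$, giving $k\le n-m$. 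Hence $k=n-m$, and $\{w_i\}$ and $\{w_{S_j}\}$ are $\integ$-independent families of the same size inside the rank-$(n-m)$ lattice $\Lambda:=\{\chi\in X^*(\GL(\alpha)):\chi|_{\Aut_Q(T)}=1\}$.

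To finish, I use that the $w_{S_j}$ generate all of $\Lambda$; via the Euler form this amounts to $\sum_j\integ\cdot\underline{d}(S_j)=\{\beta:\langle\underline{d}(T_i),\beta\rangle=0\text{ for all }i\}$, a structural property of Schofield's construction of $T^\perp$ (\cite[\S 2]{scho}). Then $\sum_j\integ\cdot w_{S_j}=\Lambda\supseteq\sum_i\integ\cdot w_i\supseteq\sum_j\integ\cdot w_{S_j}$, so both families are $\integ$-bases of $\Lambda$. Writing $c_{S_j}=\lambda_j\prod_i c_i^{a_{ij}}$ (a scalar times a single monomial, since monomials in the $c_i$ have distinct weights), the matrix $(a_{ij})$ is the change of basis between these two bases, hence lies in $\GL_{n-m}(\integ)$; a non-negative integer matrix with non-negative integer inverse is a permutation matrix, so $c_{S_j}=\lambda_j c_{\tau(j)}$ for some bijection $\tau$ and $\lambda_j\in k^\times$. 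Thus the $c_{S_j}$ are, up to scalars, precisely $c_1,\dots,c_{n-m}$: algebraically independent, and generating $\SI(Q,\alpha)$. The polynomial-ring structure, the admissibility of the $c_{S_j}$, and the count $k=n-m$ are routine; the main obstacle is the last input, that the dimension-vector lattice of $T^\perp$ is the full Euler-orthogonal complement of the $\underline{d}(T_i)$ — equivalently, that the $c_{S_j}$ miss no semi-invariant weight — which is exactly where the structure of the perpendicular category is indispensable. A self-contained alternative is induction on $|Q_0|$: split off one exceptional summand $T_m$, pass to $T_m^\perp\cong\Rep(Q')$ with one fewer vertex, and combine the inductive statement for $(Q',\alpha')$ with the single extra Schofield semi-invariant.
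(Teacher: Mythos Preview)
The paper does not supply a proof; the theorem is quoted from \cite[Theorem~4.3]{scho} and used as a black box throughout. So there is nothing on the paper's side to compare against, and your proposal amounts to a reconstruction of Schofield's argument.

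Your outline --- the Sato--Kimura analysis of a prehomogeneous space specialised to quivers --- is correct through the polynomial-ring structure $\SI(Q,\alpha)=k[c_1,\dots,c_k]$, the multiplicity-freeness of weight spaces, the count $k=n-m$ via the generic stabiliser, and the identification of $\Lambda$ with the Euler-orthogonal complement of the $\underline{d}(T_i)$. You are also right that the inductive route you sketch at the end (peel off one exceptional summand and pass to $T_m^\perp$) is close to how Schofield actually proceeds.

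There is, however, a genuine gap in the final matching step. You obtain $A=(a_{ij})\in\GL_{n-m}(\integ)$ with $a_{ij}\ge0$ and then invoke the (true) fact that a nonnegative integer matrix with nonnegative integer inverse is a permutation matrix --- but you never show $A^{-1}\ge0$. Nothing in your argument expresses the $c_i$ as monomials in the $c_{S_j}$, and $A\in\GL_{n-m}(\integ)$ with $A\ge0$ alone does not suffice: $A=\left(\begin{smallmatrix}1&1\\0&1\end{smallmatrix}\right)$ is a counterexample, corresponding to $c_{S_1}=c_1$, $c_{S_2}=c_1c_2$, which do \emph{not} generate $k[c_1,c_2]$. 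What you are missing is a statement at the level of the weight \emph{semigroup} $\Sigma=\sum_i\nat\,w_i$, not just the lattice $\Lambda$. The cleanest repair is to prove directly that each $c_{S_j}$ is an irreducible polynomial, i.e.\ that the hypersurface $\{V:\Hom_Q(V,S_j)\neq0\}$ is irreducible; once that is known, each $c_{S_j}$ equals some $c_i$ up to scalar and the weight injectivity makes the assignment bijective. This irreducibility is true --- it is essentially the content of \cite{openorb} that the codimension-one components of $\Rep(Q,\alpha)\setminus\mathcal{O}_T$ are indexed by $j=m+1,\dots,n$ via the sequences~(\ref{eq:defin}), and Lemma~\ref{lem:tech}(b),(c) then shows $c_{S_j}$ vanishes on exactly one of them --- but it is not a formality, and it is precisely the part of the theorem that goes beyond the lattice bookkeeping you have carried out.
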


Hence the nullcone can be described as follows:

$$\mathcal{Z}(Q,\alpha)=\{X\in \Rep(Q,\alpha)\,|\, \Hom(X,S_j)\neq 0, \text{ for all } j=m+1,\dots ,n\}.$$

It is shown in \cite{zwara1} that there is a large enough number $N$, such that if $c\geq N(Q)$ then $\mathcal{Z}(Q,c\cdot \alpha)$ is irreducible and a set-theoretic complete intersection. By \cite{zwara2}, for tame quivers we have more precise control over $N$. Namely, the nullcone is a complete intersection for $N(Q)$ and irreducible for $N(Q)+1$ where

\begin{equation}
N(Q)=\begin{cases}
1 & \text{ if } Q=A_n \text{ or } \widetilde{A}_n,\\
2 & \text{ if } Q=D_n, E_6, E_7 \text{ or } E_8,\\
3 & \text{ if } Q=\widetilde{D}_n, \widetilde{E}_6, \widetilde{E}_7 \text{ or } \widetilde{E}_8.
\end{cases}
\label{eq:bound}
\end{equation}

\vspace{0.1in}

For two representations $M,N\in \Rep(Q,\alpha)$, we say $N$ is a degeneration of $M$ if $N\in \overline{\mathcal{O}(M)}$, where  $\overline{\mathcal{O}(M)}$ is the closure of the orbit of $M$. We say that $N$ is a minimal degeneration of $M$ if for any representation $Q$ such that $N$ is a degeneration of $Q$ and $Q$ is a degeneration of $M$ we have either $Q\cong N$ or $Q\cong M$. By semi-continuity, if $N$ is a degeneration of $M$, then $\dim\Hom_Q(M,X)\leq \dim\Hom_Q(N,X)$ and $\dim\Ext_Q(M,X)\leq \dim\Ext_Q(N,X)$ for any $X\in \Rep(Q).$

It is an easy fact that if we have an exact sequence 
$$0\rightarrow A\rightarrow B\rightarrow C\rightarrow 0$$
then $A\oplus C$ is a degeneration of $B$. In fact for Dynkin quivers we have the following converse by \cite{degext1}:

\begin{lemma}\label{lem:ext}

Let $Q$ be a Dynkin quiver and $M,N\in \Rep(Q)$ such that $N$ be a minimal degeneration of $M$. Then there exists indecomposables $U,V$ such that $N=U\oplus V\oplus X$, $M=Z\oplus X$ and we have an exact sequence
$$0\rightarrow U\rightarrow Z \rightarrow V\rightarrow 0.$$

\end{lemma}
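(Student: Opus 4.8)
The plan is to reduce the statement to a single short exact sequence and then successively ``trim'' it until the end terms are indecomposable; throughout we may assume $N\not\cong M$.

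First I would use the fact, special to Dynkin quivers, that the degeneration order coincides with the extension order: if $N$ is a degeneration of $M$, then there is a chain $M=M_0,M_1,\dots,M_t=N$ together with non-split exact sequences $0\to U_i\to M_{i-1}\to V_i\to 0$ and isomorphisms $M_i\cong U_i\oplus V_i$ for $1\le i\le t$. A non-split exact sequence $0\to X'\to Y\to Z'\to 0$ always produces a \emph{strict} (proper) degeneration $X'\oplus Z'$ of $Y$ --- indeed $X'\oplus Z'\in\overline{\mathcal{O}(Y)}$ by the easy fact recalled above, while $\dim\Hom_Q(Z',Y)<\dim\Hom_Q(Z',X'\oplus Z')$ because $\mathrm{id}_{Z'}$ does not lift, so $Y\not\cong X'\oplus Z'$. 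Hence each intermediate $M_i$ with $1\le i\le t-1$ would be a degeneration of $M$ lying strictly above $N$ and strictly below $M$, which minimality forbids. Therefore $t=1$, and we are left with a non-split exact sequence $0\to A\to M\to B\to 0$ with $A\oplus B\cong N$.

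Next I would make the cokernel term indecomposable. Decompose $B$ into indecomposables; since the sequence does not split, its class in $\Ext_Q(B,A)$ has nonzero component along some indecomposable summand $V$ of $B$. Write $B=V\oplus\hat B$, let $M_1\subseteq M$ be the preimage of $V$ under $M\to B$, and pull back along $V\hookrightarrow B$ to get a non-split sequence $0\to A\to M_1\to V\to 0$; since $M/M_1\cong\hat B$ we also get $0\to M_1\to M\to\hat B\to 0$. If this latter sequence were non-split, then $M_1\oplus\hat B$ would be a proper degeneration of $M$ lying strictly above $A\oplus V\oplus\hat B\cong N$ (because $0\to A\to M_1\to V\to 0$ is non-split), contradicting minimality; hence it splits and $M\cong M_1\oplus\hat B$. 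Cancelling $\hat B$ (Krull--Schmidt) then shows that $A\oplus V$ is itself a minimal degeneration of $M_1$. Dually, I would pick an indecomposable summand $U$ of $A$ on which the class of $0\to A\to M_1\to V\to 0$ is nonzero, write $A=U\oplus\hat A$, and push out along the projection $A\to U$ to obtain a non-split sequence $0\to U\to Z\to V\to 0$ together with $0\to\hat A\to M_1\to Z\to 0$; the same minimality argument forces the latter to split, so $M_1\cong\hat A\oplus Z$. Putting $X:=\hat A\oplus\hat B$ gives $M\cong Z\oplus X$, $N\cong U\oplus V\oplus X$, the exact sequence $0\to U\to Z\to V\to 0$, and $U,V$ indecomposable, which is the assertion.

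The main obstacle is the ingredient invoked at the very start: the statement that over a Dynkin quiver every degeneration arises from iterated extensions is a genuine theorem (and this is precisely where representation-finiteness is used --- the analogue fails for general quivers). Once it is granted, the remaining work is elementary but calls for careful bookkeeping of strictness: every non-split short exact sequence yields a strict degeneration, and strictness survives the addition of a fixed common direct summand by Krull--Schmidt cancellation --- it is this bookkeeping that powers the minimality contradictions in the trimming steps.
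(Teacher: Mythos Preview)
Your argument is correct. The paper does not give its own proof of this lemma; it simply cites \cite{degext1} (Bongartz, \emph{On degenerations and extensions of finite-dimensional modules}) and states the result. What you have done is essentially reconstruct the relevant piece of that reference: the hard input is exactly the theorem you invoke at the start, namely that for representation-finite (in particular Dynkin) path algebras the degeneration order coincides with the iterated-extension order, and once that is granted your trimming procedure via pullback/pushout plus minimality is the standard way to pin the end terms down to indecomposables. Your bookkeeping is sound --- in particular the observation that a non-split sequence $0\to X'\to Y\to Z'\to 0$ forces $\dim\Hom_Q(Z',Y)<\dim\Hom_Q(Z',X'\oplus Z')$, hence $Y\not\cong X'\oplus Z'$, is the clean way to guarantee strictness at each step, and Krull--Schmidt cancellation justifies both the passage from minimality of $N\le M$ to minimality of $A\oplus V\le M_1$ and the analogous dual step.

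So there is no discrepancy to report: the paper outsources the proof entirely, and your write-up supplies one that matches the spirit of the cited source.
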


\section{On the reduced property of the nullcone}
\label{sec:redu}

To discuss geometric properties of the nullcone we need to first introduce some tools. We follow much of the notation introduced in \cite{openorb,mater}.

Let $Y$ be a representation satisfying $\Ext(T,Y)=0.$

We denote by $\op{tr} Y$ the trace of $T$ in $Y$, that is, the sum of all the images of all maps from $T$ to $Y$, and let $\overline{Y}=Y/\op{tr} Y$. Then it is easy to see that $\overline{Y}\in T^\perp$.

Next, we recall a construction of Bongartz \cite{bong}. Let $kQ$ be the path algebra of $Q$, viewed as a projective representation of $Q$. Let $\mu_i=\dim \Ext(T_i,kQ)$, $i=1,\dots, m$. Then there is an exact sequence
$$0\rightarrow kQ \rightarrow \tilde{T} \rightarrow T\rightarrow 0$$
such that the induced map
$$\Hom(T_l,\oplus_{i=1}^m T^{\mu_i})\rightarrow \Ext(T_l,kQ)$$
is surjective for all $l=1,\dots m$. This defines $\tilde{T}$ up to isomorphism, and $T\oplus \tilde{T}$ is a tilting module, so it has $n$ pairwise distinct indecomposable summands and $\Ext(T\oplus \tilde{T},T\oplus\tilde{T})=0$. There are $n-m$ non-isomorphic summands of $\tilde{T}$ and denote them by $T_{m+1},\dots,T_n$. We have the following

\begin{proposition}[{\cite{openorb,scho}}]
The representations $\overline{T}_{m+1},\dots,\overline{T}_n$ are representatives for the indecomposable projective objects in $T^\perp$.
\end{proposition}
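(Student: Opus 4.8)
The plan is to combine Bongartz's construction with Schofield's equivalence $T^\perp \xrightarrow{\sim} \Rep(Q^\perp)$ and identify the images of the $\overline{T}_j$ with the indecomposable projectives. First I would recall that $T \oplus \tilde{T}$ is a tilting module, so $\Ext(T \oplus \tilde{T}, T \oplus \tilde{T}) = 0$; in particular $\Ext(T, T_j) = 0$ for $j = m+1, \dots, n$, which is exactly the hypothesis needed to form $\overline{T}_j = T_j / \operatorname{tr} T_j \in T^\perp$. So the $\overline{T}_j$ are genuine objects of $T^\perp$, and there are $n - m$ of them, matching the number of vertices of $Q^\perp$, i.e. the number of indecomposable projectives in $\Rep(Q^\perp)$.

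The key step is to show each $\overline{T}_j$ is projective in $T^\perp$ and that they are pairwise non-isomorphic and indecomposable. For projectivity I would use the standard description of the perpendicular category: a short exact sequence $0 \to A \to B \to C \to 0$ in $T^\perp$ is also exact in $\Rep(Q)$, and one checks $\Ext_{T^\perp}(\overline{T}_j, C) = 0$ for all $C \in T^\perp$. Here the natural approach is to relate $\Ext_{T^\perp}(\overline{T}_j, C)$ to $\Ext_Q(T_j, C)$ using the defining sequence $0 \to \operatorname{tr} T_j \to T_j \to \overline{T}_j \to 0$ together with the fact that $\operatorname{tr} T_j$ is a quotient of a sum of copies of $T$ (hence $\operatorname{Hom}(\operatorname{tr} T_j, C)$ and $\Ext(\operatorname{tr} T_j, C)$ are controlled by $\operatorname{Hom}(T,C) = \Ext(T,C) = 0$), and then invoking $\Ext_Q(T \oplus \tilde{T}, T \oplus \tilde{T}) = 0$ to kill $\Ext_Q(T_j, C)$ for $C$ built from $\overline{T}_{m+1}, \dots, \overline{T}_n$. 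This shows the $\overline{T}_j$ are $\operatorname{Ext}$-projective in $T^\perp$; since $T^\perp \cong \Rep(Q^\perp)$ has enough projectives and exactly $n-m$ indecomposable ones, the $n-m$ objects $\overline{T}_j$ must (after checking indecomposability) exhaust them.

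For indecomposability and pairwise non-isomorphism, I would argue that $Y \mapsto \overline{Y}$ is additive and that on the summands $T_{m+1}, \dots, T_n$ of the tilting object it cannot collapse or decompose: if $\overline{T}_j$ decomposed or two coincided, one could lift this back through the exact sequences defining the $\overline{T}_j$ to contradict that $T \oplus \tilde{T}$ has exactly $n$ distinct indecomposable summands with no self-extensions (a Krull–Schmidt / counting argument). Alternatively, and more cleanly, one invokes the reference \cite{scho}: Schofield's construction of $Q^\perp$ produces the simple objects $S_{m+1}, \dots, S_n$ as the simples of $\Rep(Q^\perp)$, and the Bongartz completion is precisely engineered so that $\overline{T}_j$ becomes the projective cover of $S_j$; matching the combinatorics of the two constructions gives the identification.

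The main obstacle I expect is the bookkeeping in the $\operatorname{Ext}$-vanishing computation — carefully passing between $\Ext$ groups in $T^\perp$ and in $\Rep(Q)$ via the trace sequences, and making sure that the surjectivity condition imposed in Bongartz's construction (that $\operatorname{Hom}(T_l, \oplus T^{\mu_i}) \to \Ext(T_l, kQ)$ is onto) is exactly what guarantees projectivity rather than some weaker partial-tilting property. Everything else is essentially a dimension/counting argument once the perpendicular category is known to be $\Rep(Q^\perp)$.
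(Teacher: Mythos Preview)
The paper does not actually prove this proposition: it is stated with the citation \cite{openorb,scho} and no argument is given in the text. There is therefore no ``paper's own proof'' to compare your proposal against; the authors simply import the result from the references and immediately use it to fix the labelling so that $\overline{T}_j$ is the projective cover of $S_j$.

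As for your outline itself, the overall shape is right --- produce $n-m$ objects of $T^\perp$, show they are projective, indecomposable, pairwise non-isomorphic, and count --- but there is a soft spot in the projectivity step. You propose to kill $\Ext_Q(T_j,C)$ ``for $C$ built from $\overline{T}_{m+1},\dots,\overline{T}_n$'' using $\Ext(T\oplus\tilde T,T\oplus\tilde T)=0$. That does not suffice: you need $\Ext_{T^\perp}(\overline{T}_j,C)=0$ for \emph{every} $C\in T^\perp$, not only for those in $\op{add}(\overline{T}_{m+1}\oplus\cdots\oplus\overline{T}_n)$. Likewise, knowing that $\operatorname{tr}T_j$ is a quotient of a sum of copies of $T$ gives $\Hom(\operatorname{tr}T_j,C)=0$, but does not by itself control $\Ext(\operatorname{tr}T_j,C)$. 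The standard route in the references is different: one identifies the functor $Y\mapsto \overline{Y}$ on $\{Y:\Ext(T,Y)=0\}$ as left adjoint to the inclusion $T^\perp\hookrightarrow\Rep(Q)$, and then observes that $T\oplus\tilde T$ being a tilting module containing $T$ forces $\tilde T$ to be a relative projective generator for this adjunction, so that $\overline{T}_{m+1},\dots,\overline{T}_n$ are exactly the indecomposable projectives of $T^\perp$. Your ``alternatively'' paragraph, deferring to Schofield's construction, is closer to what the cited papers actually do.
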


We order $\overline{T}_{m+1},\dots,\overline{T}_n$ so that they are the projective covers of $S_{m+1},\dots, S_n$, respectively.

For a representation $A=B_1^{b_1}\oplus \dots \oplus B_t^{b_t}$, where $B_i$ are pairwise non-isomorphic indecomposables, and the $b_i$ are positive integers, we denote by $\op{add}(A)$ the full subcategory of $\Rep(Q)$ whose objects are representations $Y$ such that $Y\cong B_1^{c_1}\oplus \dots \oplus B_t^{c_t}$, where the $c_i$ are non-negative integers. 

Now we recall a construction from \cite{openorb}. For every $j=m+1,\dots, n$ we have an exact sequence 
\begin{equation}\label{eq:defin}
0\rightarrow T_j \rightarrow T_j^{++} \rightarrow Z_j \rightarrow 0,
\end{equation}
where the first map is a source map, and $T_j^{++}$ is a representation in $\op{add}(T)$. Denote $Z=T_1\oplus\dots \oplus T_m \oplus Z_{m+1}\dots \oplus Z_n$. We have the following lemma (see \cite{mater, openorb}):

\begin{lemma} The following hold:
\label{lem:tech}
\begin{itemize}
\item[(a)] $Z$ is a tilting module, that is, $\Ext(Z,Z)=0$.
\item[(b)] $\dim \Ext(Z_j,S_k)=\dim \Hom(\overline{T}_j,S_k)=\dim \Hom(T_j,S_k)=\delta_{jk}$, where $j,k\in\{m+1,\dots,n\}$.
\item[(c)] $\Hom(Z,S_k)=0$, where $k\in\{m+1,\dots,n\}$.
\end{itemize}
\end{lemma}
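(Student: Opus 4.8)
The plan is to derive all three statements from the long exact $\Hom$/$\Ext$ sequences obtained by applying the functors $\Hom_Q(-,M)$ and $\Hom_Q(M,-)$ to the defining sequences \eqref{eq:defin}, using three inputs: $T_j^{++}\in\op{add}(T)$; the fact that $S_k\in T^\perp$, i.e.\ $\Hom_Q(T,S_k)=\Ext_Q(T,S_k)=0$; and that $T\oplus\tilde T$ is a tilting module, so that $\Ext_Q$ vanishes between any two of $T_1,\dots,T_n$. Since $kQ$ is hereditary, $\Ext^2_Q=0$ throughout.

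I would treat (c) and (b) first. Applying $\Hom_Q(-,S_k)$ to \eqref{eq:defin} and using $\Hom_Q(T_j^{++},S_k)=\Ext_Q(T_j^{++},S_k)=0$ (because $T_j^{++}\in\op{add}(T)$ and $S_k\in T^\perp$), the six-term exact sequence collapses to $\Hom_Q(Z_j,S_k)=0$ together with an isomorphism $\Ext_Q(Z_j,S_k)\cong\Hom_Q(T_j,S_k)$. As $\Hom_Q(T_i,S_k)=0$ for $i\le m$, this already proves (c). For (b) it remains to compute $\Hom_Q(T_j,S_k)$. From $0\to\op{tr} T_j\to T_j\to\overline T_j\to0$ and the fact that $\op{tr} T_j$ is a quotient of a direct sum of copies of $T$, applying $\Hom_Q(-,S_k)$ and using $\Hom_Q(T,S_k)=0$ yields $\Hom_Q(T_j,S_k)\cong\Hom_Q(\overline T_j,S_k)$. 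Finally $\overline T_j$ is, by construction, the projective cover of the simple object $S_j$ of $T^\perp$, so $\Hom_Q(\overline T_j,S_k)=\Hom_{T^\perp}(\overline T_j,S_k)$ is the multiplicity of $S_k$ in the top of $\overline T_j$, which is $\delta_{jk}$ by Schur's lemma ($k$ being algebraically closed). Chaining these equalities gives (b).

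For (a) I would verify $\Ext_Q(Z,Z)=0$ by treating the four types of summand pairs. Between $T_i$ and $T_{i'}$ with $i,i'\le m$ the vanishing is part of $T\oplus\tilde T$ being tilting. Applying $\Hom_Q(T_i,-)$ to \eqref{eq:defin} gives $\Ext_Q(T_i,Z_j)=0$, since $\Ext_Q(T_i,T_j^{++})=0$ and $\Ext^2_Q=0$. The crucial case is $\Ext_Q(Z_j,T_i)$: applying $\Hom_Q(-,T_i)$ to \eqref{eq:defin}, the term $\Ext_Q(T_j^{++},T_i)$ vanishes, so $\Ext_Q(Z_j,T_i)$ is the cokernel of precomposition with the source map $T_j\to T_j^{++}$; since every morphism from $T_j$ into an object of $\op{add}(T)$ factors through this source map, that cokernel is $0$. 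Hence $\Ext_Q(Z_j,T_k^{++})=0$ for all $j,k$, and applying $\Hom_Q(Z_j,-)$ to \eqref{eq:defin} for the index $k$ exhibits $\Ext_Q(Z_j,Z_k)$ as a quotient of $\Ext_Q(Z_j,T_k^{++})=0$. This gives $\Ext_Q(Z,Z)=0$. To conclude that $Z$ is a tilting module I would observe that $Z$ has exactly $n=|Q_0|$ pairwise non-isomorphic indecomposable summands — namely $T_1,\dots,T_m$ and the (indecomposable, by construction) $Z_{m+1},\dots,Z_n$, which are pairwise non-isomorphic and not in $\op{add}(T)$ because $\Ext_Q(Z_j,S_j)\ne0$ whereas $\Ext_Q(T_i,S_j)=0$ — and invoke that a rigid module over the hereditary algebra $kQ$ with $n$ non-isomorphic indecomposable summands is a classical tilting module.

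The main obstacle lies in (a): pinning down the exact factorization property hidden in the word ``source map'', namely that $T_j\to T_j^{++}$ is a left $\op{add}(T)$-approximation, so that precomposition with it is surjective onto $\Hom_Q(T_j,-)$ restricted to $\op{add}(T)$, and then carrying out the $\Ext$-bookkeeping for $\Ext_Q(Z_j,Z_k)$. The counting argument for the tilting conclusion, together with the indecomposability of the $Z_j$, is the only other point where one genuinely relies on the constructions of \cite{openorb,mater}. Parts (b) and (c) are, by contrast, straightforward once the defining sequences and the membership $S_k\in T^\perp$ are available.
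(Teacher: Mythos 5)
The paper does not prove this lemma; it records it with a citation to \cite{mater,openorb}, so there is no in-paper argument to compare against. Your proposal is a correct, self-contained derivation: parts (b) and (c) follow exactly as you describe from applying $\Hom_Q(-,S_k)$ to the sequence (\ref{eq:defin}) and to $0\to\op{tr} T_j\to T_j\to\overline T_j\to 0$, using $T_j^{++}\in\op{add}(T)$, that $\op{tr} T_j$ is a quotient of copies of $T$, that $S_k\in T^\perp$, and that $\overline T_j$ is the projective cover of $S_j$ in $T^\perp$; the six-term sequence collapses to $\Hom(Z_j,S_k)=0$ and $\Ext(Z_j,S_k)\cong\Hom(T_j,S_k)\cong\Hom(\overline T_j,S_k)$, of dimension $\delta_{jk}$, and the four-case rigidity check for (a) is also sound. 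Two remarks. First, the factoring property you actually need is that every map $T_j\to M$ with $M\in\op{add}(T)$ factors through $T_j\to T_j^{++}$; this holds on your reading (minimal left $\op{add}(T)$-approximation) and also on the Auslander--Reiten reading of ``source map'' (minimal left almost split), since $T_j$ is indecomposable and not in $\op{add}(T)$, so no map $T_j\to T'$ with $T'\in\op{add}(T)$ is a split mono. Second, the lemma as stated only asserts $\Ext(Z,Z)=0$ (``a tilting module, \emph{that is}, $\Ext(Z,Z)=0$''), so the extra counting/indecomposability argument you append to upgrade rigidity to the full tilting property is a correct sketch but more than the claim requires.
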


\vspace{0.1in}

\begin{definition} We say $X\in\mathcal{Z}(Q,\alpha)$ satisfies the \itshape independent gradient conditions \normalfont if we have:
\label{jacob}
\begin{itemize}
\item[(a)] $\dim\Hom(X,S_j)=1$, for any $j=m+1,\dots n$,
\item[(b)] For any $k=m+1,\dots,n$ there exists an exact sequence
$$0\rightarrow X\rightarrow Y_k\rightarrow X\rightarrow 0$$
such that $\dim\Hom(Y_k,S_j)=2-\delta_{jk} \text{ for } j=m+1,\dots,n$\,, where $\delta$ is the Kronecker delta.
\end{itemize}
\end{definition}

\vspace{0.1in}

Using \cite[Corollary 7.4]{rankscheme} together with the Serre's Criterion (see for example \cite[Theorem 18.15]{eisen}) we have the following (see also \cite[Proposition 4.6]{maxorb}):

\begin{proposition}
Assume the nullcone $\mathcal{Z}(Q,\alpha)$ is a set-theoretic complete intersection. Then $\mathcal{Z}(Q,\alpha)$ is reduced iff each of its irreducible components contains a representation satisfying the independent gradient conditions (\ref{jacob}).
\end{proposition}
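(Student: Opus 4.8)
\emph{Proof proposal.} The plan is to reduce, via Serre's criterion, the reducedness of $\mathcal{Z}(Q,\alpha)$ to a generic smoothness statement, read off smoothness from the Jacobian of the generators $c_{S_{m+1}},\dots,c_{S_n}$ supplied by Theorem \ref{thm:schosemi}, and finally translate the resulting rank condition on differentials into the independent gradient conditions of Definition \ref{jacob} using the differential computation of \cite[Corollary 7.4]{rankscheme}. Concretely, I would equip $\mathcal{Z}(Q,\alpha)$ with its natural scheme structure, the closed subscheme of $\Rep(Q,\alpha)$ cut out by the ideal $I=(c_{S_{m+1}},\dots,c_{S_n})$, whose support is the set-theoretic nullcone by the description following Theorem \ref{thm:schosemi}. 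Since $\mathcal{Z}(Q,\alpha)$ is a set-theoretic complete intersection, the height of $I$ equals the number $n-m$ of its generators, so these generators form a regular sequence; hence $\mathcal{Z}(Q,\alpha)$ is a complete intersection, in particular Cohen--Macaulay, so it satisfies Serre's condition $(S_1)$ and has no embedded components. By Serre's criterion, $\mathcal{Z}(Q,\alpha)$ is reduced if and only if it also satisfies $(R_0)$, that is, it is regular at the generic point of each irreducible component. Since the smooth locus of $\mathcal{Z}(Q,\alpha)$ is open and each component contains dense closed points, a component is generically smooth if and only if it contains a representation $X$ at which $\mathcal{Z}(Q,\alpha)$ is smooth.

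As $\mathcal{Z}(Q,\alpha)$ is a complete intersection of the expected codimension $n-m$, the Jacobian criterion says that it is smooth at $X$ exactly when the differentials $d_Xc_{S_{m+1}},\dots,d_Xc_{S_n}$ are linearly independent in $T_X^*\Rep(Q,\alpha)$. It therefore suffices to prove, for $X\in\mathcal{Z}(Q,\alpha)$, that these differentials are linearly independent if and only if $X$ satisfies the independent gradient conditions of Definition \ref{jacob}.

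To do this I would use \cite[Corollary 7.4]{rankscheme}. Writing $d^X_{S_j}$ for the middle map in (\ref{eq:ringel}) with $V=X$, $W=S_j$, one has $c_{S_j}(X)=\det d^X_{S_j}$, so $d_Xc_{S_j}$ is governed by the adjugate of $d^X_{S_j}$ by the standard derivative formula for determinants. Since $\dim\Hom(X,S_j)\geq 1$ on the nullcone, the corank of $d^X_{S_j}$ is at least one, and the adjugate is nonzero — equivalently $d_Xc_{S_j}\neq 0$ — precisely when the corank equals one, i.e.\ when $\dim\Hom(X,S_j)=\dim\Ext(X,S_j)=1$; this is condition (a) of Definition \ref{jacob}. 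Moreover each $c_{S_j}$ vanishes on $\overline{\mathcal{O}_X}\subseteq\mathcal{Z}(Q,\alpha)$, so $d_Xc_{S_j}$ descends to a linear functional on $\Ext_Q(X,X)\cong\Rep(Q,\alpha)/T_X\mathcal{O}_X$. Assuming (a), let $f_j$ span $\Hom_Q(X,S_j)$; for a self-extension $0\to X\to Y_k\to X\to 0$ with class $e_k\in\Ext_Q(X,X)$, the adjugate computation of \cite[Corollary 7.4]{rankscheme} identifies $d_Xc_{S_j}(e_k)$, up to a nonzero scalar, with the image of the pushforward $(f_j)_\ast e_k$ in the one-dimensional space $\Ext_Q(X,S_j)$. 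Applying $\Hom_Q(-,S_j)$ to the defining sequence of $Y_k$ shows this image is nonzero exactly when $\dim\Hom(Y_k,S_j)=1$. Hence the existence, for each $k$, of an extension $Y_k$ with $\dim\Hom(Y_k,S_j)=2-\delta_{jk}$ (condition (b)) is equivalent to the existence, for each $k$, of a class $e_k\in\Ext_Q(X,X)$ with $d_Xc_{S_j}(e_k)\neq 0$ for $j=k$ and $d_Xc_{S_j}(e_k)=0$ for $j\neq k$; such a dual system $e_{m+1},\dots,e_n$ exists if and only if the functionals $d_Xc_{S_{m+1}},\dots,d_Xc_{S_n}$ on $\Ext_Q(X,X)$ are linearly independent, which closes the equivalence.

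The main obstacle is the middle of the last paragraph: making the adjugate computation of $d_Xc_{S_j}$ precise and matching it, via the long exact $\Hom$--$\Ext$ sequence, with the numerical data of Definition \ref{jacob} — this is exactly the content borrowed from \cite[Corollary 7.4]{rankscheme}. Once that is in hand, the Serre and Jacobian reductions and the final linear-algebra step (linear independence of functionals versus existence of a dual system $e_k$) are routine, the only mild care being the passage between the generic point and a closed point of each component, which uses that the complete intersection $\mathcal{Z}(Q,\alpha)$ is equidimensional.
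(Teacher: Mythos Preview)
Your proposal is correct and follows essentially the same route the paper indicates: Serre's criterion plus the Jacobian criterion, with the translation of the rank condition on the differentials of the $c_{S_j}$ into the conditions of Definition \ref{jacob} supplied by \cite[Corollary 7.4]{rankscheme}. The paper simply cites these ingredients without spelling them out; your write-up fills in exactly those details, including the identification of $d_Xc_{S_j}(e_k)$ with $(f_j)_*e_k\in\Ext_Q(X,S_j)$ and the long-exact-sequence computation of $\dim\Hom(Y_k,S_j)$, which together yield the equivalence with condition (b).
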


We define the following open subsets of $\mathcal{Z}(Q,\alpha)$: 
$$\mathcal{Z'}(Q,\alpha)=\{X\in \mathcal{Z}(Q,\alpha): \Ext(T,X)=\Ext(X,T)=0\},$$
$$\mathcal{H}(Q,\alpha)=\{X\in \Rep(Q,\alpha): \dim\Hom(X,S_j)=1, \text{ for all } j=m+1,\dots n\}.$$
By the independent gradient conditions (\ref{jacob}), if $\mathcal{H}(Q,\alpha)=\emptyset$, then $\mathcal{Z}(Q,\alpha)$ is not reduced. Now we are ready to prove our first result about reduced property of the nullcone:

\begin{proposition}\label{prop:reduced}
Assume that $\mathcal{Z'}(Q,\alpha)$ is not empty. Then the nullcone $\mathcal{Z}(Q,\alpha)$ is reduced, irreducible and a complete intersection.
\end{proposition}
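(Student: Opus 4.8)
The plan is to reduce the statement to two claims about the open subset $\mathcal Z'(Q,\alpha)\subseteq\mathcal Z(Q,\alpha)$: \textbf{(I)} $\mathcal Z'(Q,\alpha)$ is irreducible and dense in $\mathcal Z(Q,\alpha)$; \textbf{(II)} every $X\in\mathcal Z'(Q,\alpha)$ satisfies the independent gradient conditions of Definition \ref{jacob}. Granting these, the proof concludes as follows. By Theorem \ref{thm:schosemi}, $\mathcal Z(Q,\alpha)=V(c_{S_{m+1}},\dots,c_{S_n})$, so every irreducible component has codimension $\le n-m$, and by (I) it is irreducible, equal to $\overline{\mathcal Z'(Q,\alpha)}$. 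The standard description of the differential of a Schofield semi-invariant expresses $d(c_{S_j})_X$ through the connecting map $\Hom(X,S_j)\to\Ext(X,S_j)$ of a self-extension of $X$, so conditions (a)--(b) of Definition \ref{jacob} say precisely that $d(c_{S_{m+1}})_X,\dots,d(c_{S_n})_X$ are linearly independent, i.e.\ that $\pi=(c_{S_{m+1}},\dots,c_{S_n})\colon\Rep(Q,\alpha)\to\mathbb A^{\,n-m}$ is smooth at $X$. By (II) this holds at every point of the dense open $\mathcal Z'$, so $\mathcal Z(Q,\alpha)=\pi^{-1}(0)$ has codimension exactly $n-m$ — in particular it is a set-theoretic complete intersection — and is generically reduced. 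Since $\Rep(Q,\alpha)$ is smooth, miracle flatness then makes $c_{S_{m+1}},\dots,c_{S_n}$ a regular sequence, so $\mathcal Z(Q,\alpha)$ is Cohen--Macaulay; being generically reduced and Cohen--Macaulay it is reduced, and its ideal is $(c_{S_{m+1}},\dots,c_{S_n})$, a complete intersection. (Alternatively, once the set-theoretic complete intersection property is in hand, reducedness follows from the Proposition preceding this one together with (II).)

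For (I): any $X\in\mathcal Z'(Q,\alpha)$ satisfies $\Ext(T,X)=0$, hence admits the functorial exact sequence $0\to\op{tr} X\to X\to\overline X\to 0$ with $\op{tr} X$ generated by $T$ and $\overline X\in T^\perp\cong\Rep(Q^\perp)$, and $X$ is reconstructed from the data $(\op{tr} X,\overline X,\text{extension class})$. The remaining conditions cutting out $\mathcal Z'$ translate accordingly: $\Hom(X,S_j)=\Hom(\overline X,S_j)$ depends only on $\overline X$, $\Ext(X,T)=0$ is open, and $\underline d(\op{tr} X)+\underline d(\overline X)=\alpha$ leaves only finitely many choices of discrete invariants. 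Thus $\mathcal Z'(Q,\alpha)$ is fibered, with irreducible fibers, over an irreducible parameter space built from $\Rep(Q^\perp)$-data, so it is irreducible; and a dimension count — using $\Ext(X,T)=0$ and the Euler form to compute $\operatorname{codim}\overline{\mathcal Z'}$ — shows this codimension is $n-m$, so $\overline{\mathcal Z'(Q,\alpha)}$ is a full component, hence all of $\mathcal Z(Q,\alpha)$.

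For (II): fix $X\in\mathcal Z'(Q,\alpha)$. For (a), since $S_j\in T^\perp$ every morphism $X\to S_j$ annihilates the $T$-generated submodule $\op{tr} X$, so $\Hom(X,S_j)=\Hom(\overline X,S_j)$, and combining $\dim\Hom(\overline X,S_j)-\dim\Ext(\overline X,S_j)=\langle\underline d(\overline X),\underline d(S_j)\rangle$ with $\Hom(X,S_j)\ne 0$ and $\Ext(X,T)=0$ pins this dimension to $1$. For (b), given $k$ one must produce a self-extension $0\to X\to Y_k\to X\to 0$ with $\dim\Hom(Y_k,S_j)=2-\delta_{jk}$; here the source-map exact sequences $0\to T_j\to T_j^{++}\to Z_j\to 0$ of \eqref{eq:defin}, with the numerics $\dim\Ext(Z_j,S_k)=\delta_{jk}$ and $\Hom(Z,S_k)=0$ of Lemma \ref{lem:tech}, are transported along the structure of $X$ to a self-extension whose connecting map $\Hom(X,S_j)\to\Ext(X,S_j)$ vanishes for $j\ne k$ and is nonzero for $j=k$. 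I expect this last construction — realizing \eqref{eq:defin} as a self-extension of $X$ with the prescribed $\Hom$-numerics — to be the main obstacle, together with the global density assertion in (I) (that $\mathcal Z'$ is dense in the whole nullcone, not just in a union of some of its components).
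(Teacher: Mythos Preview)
Your overall architecture is right, and the reduction to checking the independent gradient conditions on a dense open is exactly what the paper does. However, claim \textbf{(II)(a)} as you state it is \emph{false}: it is not true that every $X\in\mathcal Z'(Q,\alpha)$ has $\dim\Hom(X,S_j)=1$. Take $Q=A_2$ with $1\to 2$ and $\alpha=(n,n)$; then $T=P_1^{\,n}$ with $P_1$ both projective and injective, so $\Ext(T,X)=\Ext(X,T)=0$ automatically and $\mathcal Z'(Q,\alpha)=\mathcal Z(Q,\alpha)$. The unique simple $S$ in $T^\perp$ is the simple at vertex $2$, and $\dim\Hom(X,S)=n-\operatorname{rank}X(a)$, which equals $2$ on the locus of matrices of rank $n-2$. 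So your Euler-form argument ``pins this dimension to $1$'' cannot work in general: the quantity $\langle\underline d(\overline X),\underline d(S_j)\rangle$ does not by itself control $\dim\Hom(\overline X,S_j)$, and the hypothesis $\Ext(X,T)=0$ gives no information about $\Ext(\overline X,S_j)$.

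The paper deals with this by \emph{not} asserting (II)(a) on all of $\mathcal Z'$. Instead it first proves, via a codimension argument using \cite[Lemmas 3.1 and 3.6]{zwara1}, that the open set $\mathcal H(Q,\alpha)=\{X:\dim\Hom(X,S_j)=1\ \forall j\}$ is nonempty; since $\mathcal Z(Q,\alpha)$ is already known to be irreducible (quoting \cite[Proposition 3.7]{zwara1}, which also supplies the complete-intersection property and replaces your entire part (I)), the two nonempty opens $\mathcal Z'$ and $\mathcal H$ meet, and one works with $X\in\mathcal Z'\cap\mathcal H$. The verification of condition~(b) for such $X$ is then carried out in two genuinely different cases, governed by whether the index $k$ appears in the minimal projective resolution of $\overline X$ in $T^\perp$: one case uses a pushout of the source-map sequence \eqref{eq:defin} along $T_k\to\overline X$, the other builds a non-split self-extension of $\overline X$ directly from its projective resolution via the snake lemma. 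Your sketch for (b) gestures at the first of these but does not account for the second case; this is precisely the ``main obstacle'' you anticipated, and it requires more than transporting \eqref{eq:defin}.
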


\begin{proof}
By \cite[Proposition 3.7]{zwara1}, we know already that $\mathcal{Z}(Q,\alpha)$ is irreducible and a complete intersection. 

First, we show that he set $\mathcal{H}(Q,\alpha)$ is non-empty. Assuming the contrary, for all $X\in \mathcal{Z}(Q,\alpha)$ we have that $\dim \Hom(X,S_{m+1}\oplus\cdots\oplus S_n) > m-n$. Take the following subset of $\mathcal{Z}(Q,\alpha)$:
$$\mathcal{A}=\{X\in \Rep(Q,\alpha): \exists \text{ epimorphism } X\twoheadrightarrow S_{m+1}\oplus\cdots\oplus S_n\}.$$
By \cite[Lemma 3.6]{zwara1} we have $\mathcal{Z'}(Q,\alpha)\subset\mathcal{A}$. In particular, $\op{codim} \mathcal{A}=n-m$. But since $\dim \Hom(X,S_{m+1}\oplus\cdots\oplus S_n) > m-n$, for all $X\in \mathcal{A}$, the proof of \cite[Lemma 3.1]{zwara1} implies that $\op{codim} \mathcal{A}>m-n$, a contradiction. Hence $\mathcal{H}(Q,\alpha)$ is non-empty.

Now $\mathcal{Z'}(Q,\alpha)\cap \mathcal{H}(Q,\alpha)$ is open and non-empty, and we prove that any of its elements satisfy the independence condition (\ref{jacob} b). So take an arbitrary $X\in \mathcal{Z'}(Q,\alpha)\cap \mathcal{H}(Q,\alpha)$ and write 
$$X\cong \widetilde{X} \oplus \bigoplus_{j=m+1}^n Z_j^{a_j},$$
such that $\widetilde{X}$ and $Z$ have no common indecomposable summands. As in \cite[Propositions 3.14, 3.15]{mater}, we have a minimal projective resolution of $\overline{X}$ in $T^\perp$ of the form
$$0\rightarrow \bigoplus_{j\in J} \overline{T}_j \rightarrow \bigoplus_{j=m+1}^n \overline{T}_j \rightarrow \overline{X}\rightarrow 0,$$
where $J\subset \{m+1,\dots,n\}$. Moreover, $a_j=0$ if $j\in J$, and $a_j=1$, if $j\in J^c$, where $J^c$ denotes the complement of $J$ in $\{m+1,\dots,n\}$. We construct the exact sequences as in (\ref{jacob} b) by considering two cases, whether $k\in J$ or $k\in J^c$.

First, let $k\in J^c$.  Consider the composite map $\phi:T_k\to\overline{T}_k\to \overline{X}$ where the first map is the projection $T_k\to T_k/\op{tr} T_k$ and the second is from the minimal resolution of $\overline{X}$. Applying $\Hom(-,S_k)$ to the minimal resolution, together with Lemma \ref{lem:tech} we have the induced isomorphisms of $1$-dimensional spaces
\begin{equation}\label{eq:isomo}
\Hom(\overline{X},S_k)\cong \Hom(\overline{T}_k,S_k)\cong \Hom(T_k,S_k)\cong \Ext(Z_k,S_k).
\end{equation} 
Consider the following diagram
\[\xymatrix{\
0 \ar[r] & T_k \ar[r]\ar[d]^{\phi} & T_k^{++} \ar[r]\ar[d] & Z_k \ar[r]\ar@{=}[d] & 0\\
0 \ar[r] & \overline{X} \ar[r] & U_k \ar[r] & Z_k \ar[r] & 0
}\]
where the second row is the push-out of the first via $\phi$. Take $j\in \{m+1,\dots, n\}, j\neq k$. Applying $\Hom(-,S_j)$ to the second exact sequence, the induced long exact sequence together with Lemma \ref{lem:tech} gives 
$$\dim \Hom(U_k,S_j)=\dim\Hom(\overline{X},S_j) = 1.$$
On the other hand, applying $\Hom(-,S_k)$ we get the exact sequence
$$0\rightarrow \Hom(U_k,S_k) \rightarrow \Hom(\overline{X},S_k) \rightarrow \Ext(Z_k,S_k),$$
where, by construction, the last map is the composition of isomorphisms in (\ref{eq:isomo}). Hence $\Hom(U_k,S_k)=0$, so we have $\dim \Hom(U_k,S_j)=\delta_{jk}$, where $j\in \{m+1,\dots,n\}$. 

Now applying $\Hom(Z_k,-)$ to the exact sequence 
$$0\rightarrow \op{tr} X \rightarrow X \rightarrow \overline{X}\rightarrow 0$$
we get $\Ext(Z_k,X)\cong \Ext(Z_k,\overline{X})$, since $\op{tr} X \in \op{add}(Z)$ by \cite[Proposition 3.9]{mater}. Hence can we lift (uniquely) the exact sequence with middle term $U_k$ and get the following exact diagram:

\[\xymatrix@R-0.5pc{\
 & 0 \ar[d] & 0 \ar[d] &  &\\
 & \op{tr} X \ar[d]\ar@{=}[r] & \op{tr} X\ar[d] &  & \\
0 \ar[r] & X \ar[r]\ar[d] & V_k \ar[r]\ar[d] & Z_k \ar[r]\ar@{=}[d] & 0\\
0 \ar[r] & \overline{X} \ar[d]\ar[r] & U_k \ar[d] \ar[r] & Z_k\ar[r]& 0\\
& 0 & 0 &  &
}
\]

Applying $\Hom(-,S_j)$ for $j=m+1,\dots,n$ to the middle column we get that $\dim \Hom(V_k,S_j) = \dim \Hom(U_k,S_j)= \delta_{jk}$. By Lemma \ref{lem:tech} $\Hom(Z,S_j)=0$, so $\dim\Hom(X,S_j)=\dim\Hom(\widetilde{X},S_j)=1$.  Hence if we put 
$$Y_k=V_k\oplus \widetilde{X} \oplus \bigoplus_{j\in J^c\backslash \{k\}} Z_j$$
we have an exact sequence 
$$0\rightarrow X \rightarrow Y_k\rightarrow X \rightarrow 0$$
satisfying the independence condition (\ref{jacob} b).

\vspace{0.05in}

Now we consider the second case, when $k\in J$. Denote

\[Q_k=\bigoplus_{j\in J\backslash\{k\}}\overline{T}_j \text{ and } R_k= \bigoplus_{\substack{j=m+1\\j\neq k}}^n\overline{T}_j.\]

Let $\psi$ denote the injective map of the minimal projective resolution of $\overline{X}$ in $T^\perp$, so $\psi:Q_k\oplus \overline{T}_k \to R_k \oplus \overline{T}_k$. Consider the following commutative diagram
\[\xymatrix{
0 \ar[r] & Q_k \oplus \overline{T}_k \ar[r]^-{\binom{I}{0}}\ar[d]^{\psi} & Q_k \oplus \overline{T}_k \oplus Q_k \oplus \overline{T}_k \ar[r]^-{(0 ~ I)}\ar[d]^{\psi_k} & Q_k\oplus \overline{T}_k \ar[r]\ar[d]^{\psi} & 0\\
0 \ar[r] & R_k \oplus \overline{T}_k \ar[r]^-{\binom{I}{0}} & R_k \oplus \overline{T}_k \oplus R_k \oplus \overline{T}_k \ar[r]^-{(0 ~ I)} & R_k\oplus \overline{T}_k \ar[r] & 0
}\]
where the map $\psi_k:Q_k^2\oplus \overline{T}_k^2 \to R_k^2\oplus \overline{T}_k^2$ is obtained from $\psi\oplus \psi$ by adding the identity map from the second copy of $\overline{T}_k$ to the first copy of $\overline{T}_k$. Denote $W_k=\op{coker} \psi_k$. By the snake lemma, $\psi_k$ is injective and we have an exact sequence
\[0\rightarrow \overline{X} \rightarrow W_k\rightarrow \overline{X} \rightarrow 0.\]
Moreover, applying $\Hom(-,S_j)$ to the (non-minimal) projective resolution $\psi_k:Q_k^2\oplus \overline{T}_k^2 \to R_k^2\oplus \overline{T}_k^2$, we have that $\dim \Hom(W_k,S_j)=2-\delta_{jk}$, for $j=m+1,\dots, n$. Now we pull-back the sequence above via the map $X \twoheadrightarrow \overline{X}$ to get the following diagram
\[\xymatrix@R-0.5pc{
 & & 0 \ar[d] & 0 \ar[d] &\\
 & & \op{tr} X \ar[d]\ar@{=}[r] & \op{tr} X \ar[d] & \\
 0 \ar[r] & \overline{X} \ar[r]\ar@{=}[d] & W'_k \ar[r]\ar[d] & X \ar[r]\ar[d] & 0 \\
 0 \ar[r] & \overline{X} \ar[r] & W_k \ar[r]\ar[d] & \overline{X} \ar[r]\ar[d] & 0 \\
 & & 0 & 0 &
}\]
Applying $\Hom(-,S_j)$ to the middle column, we see that $\dim \Hom(W'_k, S_j)=\dim\Hom(W_k,S_j)=2-\delta_{jk}$, where $j=m+1,\dots, n$. 

Now the surjection $X \twoheadrightarrow \overline{X}$ gives a surjective map $\Ext(X,X) \twoheadrightarrow \Ext(X,\overline{X})$, hence we can lift the sequence $0\to \overline{X} \to W'_k \to X \to 0$ to a sequence $0\to X\to Y_k \to X \to 0$. As before, from the sequence $0\to \op{tr} X \to Y_k\to W'_k\to 0$ we see that $\dim \Hom(Y_k, S_j)=\dim\Hom(W'_k,S_j)=2-\delta_{jk}$, where $j=m+1,\dots, n$, giving the desired property.
\end{proof} 

\vspace{0.1in}

We can conclude a fortiori that for a reduced nullcone we have:

\begin{corollary}
The set $\mathcal{H}(Q,\alpha)\cap \mathcal{Z'}(Q,\alpha)$ is contained in the smooth locus of $\mathcal{Z}(Q,\alpha)$, which in turn is contained in $\mathcal{H}(Q,\alpha)$.
\end{corollary}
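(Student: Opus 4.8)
The plan is to prove the two asserted inclusions separately, in both cases by translating smoothness into the Jacobian criterion. Under the running hypotheses and the assumption that $\mathcal{Z}(Q,\alpha)$ is reduced, it is a reduced complete intersection, cut out scheme-theoretically in $\Rep(Q,\alpha)$ by the regular sequence $c_{S_{m+1}},\dots,c_{S_n}$, whose length $n-m$ equals $\op{codim}\mathcal{Z}(Q,\alpha)$ (Theorem \ref{thm:schosemi}, \cite{zwara1}, Proposition \ref{prop:reduced}). Hence a point $X\in\mathcal{Z}(Q,\alpha)$ lies in the smooth locus if and only if the $n-m$ differentials $d(c_{S_j})_X$, $j=m+1,\dots,n$, are linearly independent in the cotangent space at $X$. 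Since each $c_{S_j}$ is $\GL(\alpha)$-semi-invariant, $d(c_{S_j})_X$ annihilates the orbit directions, so it factors through the normal space $\Ext_Q(X,X)\cong\Rep(Q,\alpha)/T_X(\mathcal{O}_X)$; this is the viewpoint under which \cite[Corollary 7.4]{rankscheme} computes it.

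The first step is the pointwise refinement of the Proposition preceding Definition \ref{jacob}: a point $X\in\mathcal{Z}(Q,\alpha)$ lies in the smooth locus if and only if it satisfies the independent gradient conditions (\ref{jacob}). Here \cite[Corollary 7.4]{rankscheme} reads (\ref{jacob} a), $\dim\Hom_Q(X,S_j)=1$, as the non-vanishing of $d(c_{S_j})_X$: since $\langle\alpha,\underline{d}(S_j)\rangle=0$ the map $c_{S_j}(V)=\det d^V_{S_j}$ is the determinant of a \emph{square} matrix whose kernel is $\Hom_Q(X,S_j)$ by (\ref{eq:ringel}), so its gradient, the adjugate matrix, is nonzero exactly when the corank is $1$; and it reads (\ref{jacob} b) as saying that the self-extension $0\to X\to Y_k\to X\to 0$ defines a tangent vector $\xi_k\in\Ext_Q(X,X)$ with $d(c_{S_k})_X(\xi_k)\neq0$ and $d(c_{S_j})_X(\xi_k)=0$ for $j\neq k$. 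Granting this, linear independence of the $d(c_{S_j})_X$ is immediate, since pairing a relation $\sum_j\lambda_j\,d(c_{S_j})_X=0$ against $\xi_k$ forces $\lambda_k=0$. The converse is the same argument backwards: if $X$ is smooth the Jacobian has full rank $n-m$, so every $d(c_{S_j})_X\neq0$, hence $\dim\Hom_Q(X,S_j)=1$, which is (\ref{jacob} a), and a dual basis of tangent vectors, lifted along $X\twoheadrightarrow\overline{X}$ and corrected by $\op{tr}X\in\op{add}(Z)$ exactly as in the proof of Proposition \ref{prop:reduced}, realizes (\ref{jacob} b).

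With this equivalence both inclusions are short. For the inclusion of $\mathcal{H}(Q,\alpha)\cap\mathcal{Z'}(Q,\alpha)$ into the smooth locus: the proof of Proposition \ref{prop:reduced} shows that \emph{every} element $X$ of $\mathcal{H}(Q,\alpha)\cap\mathcal{Z'}(Q,\alpha)$ satisfies (\ref{jacob}) --- condition (a) by the very definition of $\mathcal{H}(Q,\alpha)$, and condition (b) for an arbitrary such $X$ --- and since $\mathcal{Z'}(Q,\alpha)\subseteq\mathcal{Z}(Q,\alpha)$, the equivalence above puts $X$ in the smooth locus. For the inclusion of the smooth locus into $\mathcal{H}(Q,\alpha)$: a smooth point $X$ satisfies (\ref{jacob}), in particular (\ref{jacob} a), i.e. $\dim\Hom_Q(X,S_j)=1$ for every $j=m+1,\dots,n$, which is precisely the defining condition of $\mathcal{H}(Q,\alpha)$.

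The only non-formal ingredient is the dictionary of \cite[Corollary 7.4]{rankscheme} between the differential $d(c_{S_j})_X$, viewed as a functional on $\Ext_Q(X,X)$, and the numbers $\dim\Hom_Q(Y,S_j)$ for middle terms $Y$ of self-extensions of $X$, together with the elementary bookkeeping relating the square matrix $d^X_{S_j}$, its adjugate, and the identification $\ker d^X_{S_j}\cong\Hom_Q(X,S_j)$ from (\ref{eq:ringel}). One should also be mildly careful that the Proposition preceding Definition \ref{jacob} is stated at the level of irreducible components, so that what is really used is its pointwise strengthening obtained by combining that dictionary with the Jacobian criterion for a reduced complete intersection. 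Beyond this the corollary is essentially a repackaging of the proof of Proposition \ref{prop:reduced}, and I expect no difficulty.
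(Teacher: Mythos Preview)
Your proposal is correct and follows the paper's own (implicit) argument: the corollary is stated without proof in the paper, introduced with ``a fortiori'' as an immediate consequence of the pointwise Jacobian reading of Definition~\ref{jacob} (via \cite[Corollary 7.4]{rankscheme}) together with the fact, shown in the proof of Proposition~\ref{prop:reduced}, that every point of $\mathcal{H}(Q,\alpha)\cap\mathcal{Z'}(Q,\alpha)$ satisfies the independent gradient conditions. One minor remark: your justification of the converse ``smooth $\Rightarrow$ (\ref{jacob}~b)'' appeals to the trace/lifting constructions from the proof of Proposition~\ref{prop:reduced}, which require $X\in\mathcal{Z'}(Q,\alpha)$; but that converse is not actually needed for the corollary (only (\ref{jacob}~a) is used for the second inclusion), and in any event it follows directly from the dictionary by choosing $\xi_k\in\Ext_Q(X,X)$ dual to the independent functionals $d(c_{S_j})_X$.
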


We also deduce the following result:
 
\begin{theorem}
\label{thm:reduc}
Let $T_1,\dots, T_m$ be pairwise non-isomorphic indecomposables in $\Rep(Q)$ such that $\Ext(T_i,T_j)=0$, for any $i,j\leq m$. Then there is a positive integer $N$ such the nullcone $\mathcal{Z}(Q,\alpha)$ is reduced, irreducible and a complete intersection for any dimension vector $\alpha = \lambda_1\cdot \underline{d}(T_1) + \dots + \lambda_m \cdot\underline{d}(T_m)$ with $\lambda_i\geq N$ for $i=1,\dots, m$.
\end{theorem}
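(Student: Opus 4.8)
The plan is to reduce the statement to Proposition \ref{prop:reduced}, so that it suffices to find a positive integer $N$ for which $\mathcal{Z}'(Q,\alpha)\neq\emptyset$ whenever $\alpha=\lambda_1\cdot\underline{d}(T_1)+\dots+\lambda_m\cdot\underline{d}(T_m)$ with all $\lambda_i\geq N$. The first point I would establish is that the data attached to such an $\alpha$ is, to a large extent, independent of the $\lambda_i$. Since $\Ext(T_i,T_j)=0$ for all $i,j\leq m$, the module $T=T_1^{\oplus\lambda_1}\oplus\dots\oplus T_m^{\oplus\lambda_m}$ satisfies $\Ext(T,T)=0$ for every choice of $\lambda_i\geq 1$; hence (after restricting to the support of $\alpha$, so that the standing assumptions of the paper apply) $\alpha$ is prehomogeneous with generic representation $T$, whose $m$ pairwise non-isomorphic indecomposable summands are precisely $T_1,\dots,T_m$. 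Consequently $\op{add}(T)$, the perpendicular category $T^\perp$, its simple objects $S_{m+1},\dots,S_n$, the Bongartz summands $T_{m+1},\dots,T_n$, the exact sequences (\ref{eq:defin}) and the tilting module $Z$ of Lemma \ref{lem:tech} do not depend on the $\lambda_i$.

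The second step is to exhibit one representation in $\mathcal{Z}'(Q,\cdot)$ of a dimension vector of the required shape. I would take
$$X_0:=\bigoplus_{j=m+1}^{n}\bigl(T_j\oplus Z_j\bigr),\qquad \alpha_0:=\underline{d}(X_0).$$
By (\ref{eq:defin}) we have $\underline{d}(T_j)+\underline{d}(Z_j)=\underline{d}(T_j^{++})$, and since every $T_j^{++}\in\op{add}(T)$ this gives $\alpha_0=\kappa_1\cdot\underline{d}(T_1)+\dots+\kappa_m\cdot\underline{d}(T_m)$ with $\kappa_i\in\nat$; the summands $Z_j$ are precisely the correction needed so that $\alpha_0$ lies in the semigroup generated by $\underline{d}(T_1),\dots,\underline{d}(T_m)$, which is not the case for $\bigoplus_j T_j$ or $\bigoplus_j\overline{T}_j$ alone. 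I then claim $X_0\in\mathcal{Z}'(Q,\alpha_0)$. For the nullcone condition, Lemma \ref{lem:tech}(b) gives $\dim\Hom(T_k,S_k)=1$ while Lemma \ref{lem:tech}(c) gives $\Hom(Z,S_k)=0$, so $\dim\Hom(X_0,S_k)=1\neq 0$ for all $k$. For the vanishing $\Ext(T,X_0)=0$, note that $\Ext(T_i,T_j)=0$ because $T_i,T_j$ are summands of the tilting module $T\oplus\tilde{T}$, and $\Ext(T_i,Z_j)=0$ because $T_i$ and $Z_j$ are summands of the tilting module $Z$ (Lemma \ref{lem:tech}(a)); the same reasoning gives $\Ext(X_0,T)=0$.

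The final step passes to arbitrary large multiplicities. Set $N:=\max(1,\kappa_1,\dots,\kappa_m)$, let $\alpha=\lambda_1\cdot\underline{d}(T_1)+\dots+\lambda_m\cdot\underline{d}(T_m)$ with $\lambda_i\geq N$ for all $i$, and put $X:=X_0\oplus P$ where $P:=T_1^{\oplus(\lambda_1-\kappa_1)}\oplus\dots\oplus T_m^{\oplus(\lambda_m-\kappa_m)}\in\op{add}(T)$. Then $\underline{d}(X)=\alpha$, and $X\in\mathcal{Z}'(Q,\alpha)$: indeed $\Hom(X,S_k)\cong\Hom(X_0,S_k)\neq 0$ since $\Hom(T_i,S_k)=0$ for $i\leq m$ (as $S_k\in T^\perp$), and $\Ext(T,X)=\Ext(X,T)=0$ by combining the hypothesis $\Ext(T_i,T_j)=0$ for $i,j\leq m$ with the vanishings already obtained for $X_0$. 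So $\mathcal{Z}'(Q,\alpha)\neq\emptyset$, and Proposition \ref{prop:reduced} yields that $\mathcal{Z}(Q,\alpha)$ is reduced, irreducible and a complete intersection. The one place where some care is needed is the construction of $X_0$, together with the observation that its dimension vector is forced to be a non-negative combination of $\underline{d}(T_1),\dots,\underline{d}(T_m)$ — this is what dictates the $T_j\oplus Z_j$ pairing; once $X_0$ is available, the reduction to a general $\alpha$ (by adjoining the variable $\op{add}(T)$-summand $P$) and the check that $X\in\mathcal{Z}'(Q,\alpha)$ are routine consequences of Lemma \ref{lem:tech} and the tilting identities.
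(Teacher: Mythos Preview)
Your argument is correct. Both the paper and you reduce to Proposition~\ref{prop:reduced}, so the only issue is nonemptiness of $\mathcal{Z}'(Q,\alpha)$ for large multiplicities; the paper's proof simply invokes \cite[Corollary~3.4]{zwara1} or \cite[Proposition~4.7]{mater} for this, whereas you give an explicit witness $X_0=\bigoplus_{j>m}(T_j\oplus Z_j)$ and then pad by an $\op{add}(T)$-summand. Your construction is precisely the one the paper sketches in Remark~\ref{characteristic}'s predecessor (the remark immediately following the theorem), where it is noted that $Z_{m+1}\oplus T_{m+1}\oplus\dots\oplus Z_n\oplus T_n\oplus T'\in\mathcal{H}(Q,\alpha)\cap\mathcal{Z}'(Q,\alpha)$ once $\alpha-\sum_j\underline{d}(T_j^{++})$ is a dimension vector. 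The trade-off is exactly as the paper says: your self-contained construction gives a perfectly valid (and arguably cleaner) proof, but the resulting bound $N=\max_i\kappa_i$ coming from $\sum_j\underline{d}(T_j^{++})$ can be cruder than the bound implicit in the cited nonemptiness criteria for $\mathcal{Z}'(Q,\alpha)$.
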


\begin{proof}
If we pick $N$ to be large enough, the set $\mathcal{Z'}(Q,\alpha)$ is not empty by \cite[Corollary 3.4]{zwara1} or by \cite[Proposition 4.7]{mater}. 
\end{proof}

\begin{remark}
One can give the following short proof of the theorem above. If we allow $N$ to be large enough so that $\alpha - \underline{d}(T_{m+1}^{++})-\dots -  \underline{d}(T_n^{++})$ is a dimension vector, then there is an element of the form $Z_{m+1}\oplus T_{m+1} \oplus \dots \oplus Z_n \oplus T_n \oplus T'$ that lies in $\mathcal{H}(Q,\alpha)\cap \mathcal{Z'}(Q,\alpha)$, where $T'\in \op{add} T$. Moreover, one can easily see that the independence condition (\ref{jacob} b) is also satisfied by using the just the sequences (\ref{eq:defin}). However, this condition on $N$ is cruder than the condition $\mathcal{Z'}(Q,\alpha)\neq \emptyset$.
\end{remark}

\begin{remark}\label{characteristic}
Hence we can conclude that if $\alpha$ is not \lq\lq too small", then the semi-invariants $c_{S_{m+1}},\dots, c_{S_n}$ form a regular sequence and generate a prime ideal in $k[\Rep(Q,\alpha)]$. In fact, these properties hold for an arbitrary field $k$ (being geometrically reduced and irreducible). This is because the semi-invariants $c_{S_{m+1}},\dots, c_{S_n}$ are defined over any field $k$ (not necessarily algebraically closed) by construction, since the representations $S_i$  themselves are ($\dim S_k$ are real Schur roots), cf. \cite{kac,schofieldef}.
\end{remark}

For tame quivers, one can give more precise information on $N$. Bounds for a condition similar to $\mathcal{Z'}(Q,\alpha)\neq \emptyset$ have been investigated previously in \cite{mater}. So for Dynkin quivers $N$ can be taken to be $N(Q)+1$ as in (\ref{eq:bound}). Also, for extended Dynkin quivers similar bounds have been announced in \cite[Remark 6.7]{mater2}. 

However, for Dynkin quivers we show by a different reasoning that for the nullcone to be reduced we only need $N=N(Q)$. We keep the usual notation.

\begin{theorem}
\label{thm:dynk}
Let $Q$ we a Dynkin quiver and set $N(Q)=1$, if $Q$ is of type $A$, and $N(Q)=2$ otherwise. If $\lambda_i\geq N(Q)$ for all $i=1,\dots,m$\,, then the nullcone $\mathcal{Z}(Q,\alpha)$ is reduced and a complete intersection.
\end{theorem}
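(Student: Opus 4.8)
The plan is to separate off the complete-intersection claim — which is already contained in \cite{zwara2}, being precisely the content of the bound \eqref{eq:bound} for $N(Q)$ — and concentrate on showing that $\mathcal{Z}(Q,\alpha)$ is reduced. By the reducedness criterion recorded after Definition \ref{jacob} (the translation of Serre's criterion, applicable here because $\mathcal{Z}(Q,\alpha)$ is a set-theoretic complete intersection), it suffices to exhibit, in each irreducible component of $\mathcal{Z}(Q,\alpha)$, one representation satisfying the independent gradient conditions (\ref{jacob}). Since $Q$ is Dynkin, $\Rep(Q,\alpha)$ has only finitely many $\GL(\alpha)$-orbits, so $\mathcal{Z}(Q,\alpha)$ is a finite union of orbit closures, and its irreducible components are exactly the closures $\overline{\mathcal{O}(M)}$ where $M$ is maximal, in the degeneration order, among the representations $X$ of dimension vector $\alpha$ with $\Hom_Q(X,S_j)\neq 0$ for all $j=m+1,\dots,n$. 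First I would fix such a maximal $M$ and verify (\ref{jacob}) for it.

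\emph{Condition (\ref{jacob} a).} I would begin by proving $\dim\Hom_Q(M,S_j)=1$ for every $j$, i.e. $M\in\mathcal{H}(Q,\alpha)$. Suppose $\dim\Hom_Q(M,S_{j_0})\geq 2$ for some $j_0$; the aim is to contradict maximality of $M$ by producing $M'\not\cong M$ with $M\in\overline{\mathcal{O}(M')}$ and still $\Hom_Q(M',S_j)\neq 0$ for all $j$. The mechanism is the one underlying Lemma \ref{lem:ext}: the surplus in $\Hom_Q(M,S_{j_0})$ forces $M$ to carry a summand realizing a non-minimal form of the projective-cover situation for $S_{j_0}$, and using the source sequences \eqref{eq:defin} together with the $\Hom$-identities of Lemma \ref{lem:tech} one straightens this into a generization $M'$ that lowers $\dim\Hom_Q(-,S_{j_0})$ by the surplus while leaving $\dim\Hom_Q(-,S_j)$ unchanged for $j\neq j_0$, in particular still nonzero. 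This is the step where the hypothesis $\lambda_i\geq N(Q)$ is essential: $N(Q)$ is calibrated exactly so that $\alpha$ always leaves enough room inside $\Rep(Q,\alpha)$ to carry out this straightening. (That condition (\ref{jacob} a) must hold is in any case necessary for what we want, since $c_{S_{j_0}}$ vanishes along $\overline{\mathcal{O}(M)}$ to order at least $\dim\Hom_Q(M,S_{j_0})$; the work is in doing it constructively for every component.)

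\emph{Condition (\ref{jacob} b).} For $M\in\mathcal{H}(Q,\alpha)$ as above and each $k$, I would build an exact sequence $0\to M\to Y_k\to M\to 0$ with $\dim\Hom_Q(Y_k,S_j)=2-\delta_{jk}$ by the same chain of push-outs, pull-backs and liftings used in the proof of Proposition \ref{prop:reduced}: from the source sequence $0\to T_k\to T_k^{++}\to Z_k\to 0$ of \eqref{eq:defin} one glues two copies of $M$ along the summand through which $\Hom_Q(M,S_k)$ is realized, and since $\Hom_Q(T_k^{++},S_j)=0$, $\dim\Hom_Q(T_k,S_j)=\delta_{jk}$ and $\dim\Hom_Q(Z_k,S_j)=0$ by Lemma \ref{lem:tech}, the middle term acquires no extra $S_k$-multiplicity, giving $\dim\Hom_Q(Y_k,S_j)=1+1-\delta_{jk}=2-\delta_{jk}$. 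The only change from Proposition \ref{prop:reduced} is that $M$ need not lie in $\mathcal{Z}'(Q,\alpha)$, which may be empty for $N=N(Q)$; so the diagram chases have to be performed relative to $\op{tr} M$ and $\overline{M}$ without invoking $\Ext_Q(T,M)=\Ext_Q(M,T)=0$.

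\emph{Main obstacle and the $A$ versus $D,E$ split.} The hard part is Condition (\ref{jacob} a), and this is exactly why a reasoning different from Theorem \ref{thm:reduc} is needed: for $N=N(Q)$ the set $\mathcal{Z}'(Q,\alpha)$ may be empty, so $\mathcal{Z}(Q,\alpha)$ need not be irreducible and some of its components may consist entirely of representations with nonvanishing $\Ext_Q(T,-)$ or $\Ext_Q(-,T)$; controlling the generic representation $M$ of such a component well enough to run the straightening argument is where the Dynkin-specific input — finiteness of orbits, the minimal-degeneration description of Lemma \ref{lem:ext}, and the explicit value of $N(Q)$ — is indispensable. Finally, the values $N(Q)=1$ in type $A$ and $N(Q)=2$ in types $D,E$ are inherited directly from \eqref{eq:bound}: in type $A$ every sincere prehomogeneous dimension vector automatically has all multiplicities $\lambda_i\geq 1$, so the conclusion holds with no hypothesis at all, whereas in types $D,E$ the bound $\lambda_i\geq 2$ is genuinely required.
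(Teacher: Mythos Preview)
Your overall framework is sound --- reduce to the independent gradient conditions on each component, use that components are orbit closures in the Dynkin case --- but both the argument for (\ref{jacob} a) and the one for (\ref{jacob} b) have genuine gaps, and the paper's proof is organized around a structural device you do not use: \emph{induction on the number of semi-invariants}.

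For condition (\ref{jacob} a), your ``straightening'' step is not a proof. You assert that the surplus in $\Hom_Q(M,S_{j_0})$ lets you build a proper generization $M'$ still in the nullcone via the source sequences \eqref{eq:defin} and Lemma \ref{lem:tech}, but those sequences involve the $T_k$, not summands of $M$, and you give no mechanism for producing $M'$. The paper instead proves the statement for every zero set $Z(c_{S_{i_1}},\dots,c_{S_{i_k}})$ by induction on $k$. Given a component with generic point $X$, for each $l$ it drops $f_l$ and, by the complete-intersection codimension count, finds $X_l$ in a component of the smaller zero set with $X$ a \emph{minimal} degeneration of $X_l$; then Lemma \ref{lem:ext} gives $X\cong A_l\oplus B_l\oplus Y_l$, $X_l\cong C_l\oplus Y_l$. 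The inductive hypothesis applied to $X_l$ controls all the $\Hom(-,S_j)$, and assuming $\dim\Hom(X,S_1)>1$ forces $A_l\cong A_1$, $B_l\cong B_1$ for all $l$, whence a single decomposition $X=A\oplus B\oplus Y$ with many $\Ext$-vanishings. One then shows $Y\in\op{add}(T)$, and --- this is where $N(Q)=2$ for types $D,E$ actually enters --- a quadratic-form inequality (the Dynkin bound $\langle\underline{d}(A),\underline{d}(B)\rangle+\langle\underline{d}(B),\underline{d}(A)\rangle<2$) forces every $T_i$ to occur in $Y$. Hence $X\in\mathcal{Z}'(Q,\alpha)$, and Proposition \ref{prop:reduced} yields the contradiction. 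Your sketch contains none of this machinery, and in particular no concrete role for the hypothesis $\lambda_i\ge 2$.

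For condition (\ref{jacob} b), your plan to rerun the diagram chases of Proposition \ref{prop:reduced} ``without invoking $\Ext_Q(T,M)=\Ext_Q(M,T)=0$'' is not viable as stated: those vanishings are precisely what guarantees $\op{tr}X\in\op{add}(Z)$ and hence the isomorphism $\Ext(Z_k,X)\cong\Ext(Z_k,\overline{X})$ used to lift the pushout sequence. The paper avoids this entirely: once the inductive data $X_l$ are in hand, the short exact sequence $0\to A_l\to C_l\to B_l\to 0$ immediately yields $0\to X\to X\oplus X_l\to X\to 0$, and the inductive hypothesis on $X_l$ gives $\dim\Hom(X\oplus X_l,S_j)=2-\delta_{jl}$ for free. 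So (\ref{jacob} b) falls out of the same induction, with no separate construction.
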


\begin{proof}
The bounds from (\ref{eq:bound}) imply that the nullcone is a set-theoretic complete intersection. Hence it is enough to verify the independent gradient conditions (\ref{jacob}). We are going to prove the result for arbitrary zero sets of semi-invariants $Z(c_{S_{i_1}},\dots ,c_{S_{i_k}})$, $i_j\in\{m+1,\dots,n\}$, for $j=1,\dots,k$\,, by induction on the number of semi-invariants $k\leq n-m$. If $k=0$ there is nothing to prove. For simplicity, we can assume $i_j=j$ and denote $f_j=c_{S_j}$, $j=1,\dots,k$. Now take any irreducible component of $\mathcal{Z}:=Z(f_1,\dots ,f_k)$, which, since $Q$ is of finite type, is the closure of the orbit of a representation, say $X$. Take $l=1,\dots, k$ arbitrary and look at the zero-set $\mathcal{Z}_l=Z(f_1,\dots, f_{l-1},f_{l+1},\dots f_k)$. Since the zero-sets are complete intersections, $\dim \mathcal{Z}_l=\dim \mathcal{Z}+1$. Hence there is an irreducible component of $\mathcal{Z}_l$ which is the closure of an orbit, say $X_l$, so that $X$ is a minimal degeneration of $X_l$. By Lemma \ref{lem:ext} we can write
\[X\cong A_l\oplus B_l \oplus Y_l \text{ and } X_l\cong C_l \oplus Y_l\]
such that $A_l,B_l$ are indecomposables and there is an exact sequence $0\to A_l \to C_l \to B_l\to 0$. By the induction hypothesis, $X_l$ satisfies the independent gradient condition (\ref{jacob} a), hence $\dim \Hom(X_l,S_j)=\Ext(X_l,S_j)=1-\delta_{jl}$, for $j=1,\dots, k$. This implies that $\Hom(Y_l,S_l)=\Hom(C_l,S_l)=\Hom(B_l,S_l)=\Ext(Y_l,S_l)=\Ext(C_l,S_l)=\Ext(A_l,S_l)=0$, $\dim\Hom(X,S_l)=\dim\Hom(A_l,S_l)=\dim\Ext(B_l,S_l)>0$.

\vspace{0.05in}

Now assume that $X$ does not satisfy the first independent gradient condition (\ref{jacob} a), hence we assume WLOG that $\dim \Hom(X,S_1)>1$. Then $\dim \Hom(A_1,S_1)=\Ext(B_1,S_1)>1$, hence $A_1$ and $B_1$ are not direct summands of $X_l$ or $Y_l$, for any $l=1,\dots, k$. Suppose $A_1\cong B_l$, for some $l>1$. Then the sequence $0\to A_l \to C_l \to B_l\to 0$ gives $\dim\Hom(C_l,S_1)=\dim\Hom(A_1,S_1)>1$, which again is a contradiction. Hence $A_l\cong A_1$, $B_l\cong B_1$, hence also $Y_l\cong Y_1$ for all $l=1,\dots, k$. So put $A:=A_1$, $B:=B_1$, $Y=Y_1$. Summarizing, $X$ must be of the form 
\[X\cong A\oplus B\oplus Y,\]
with $A,B$ indecomposables together with exact sequences $0\to A\to C_l \to B\to 0$, for $l=1,\dots,k$. 

Now we also see that $\Ext(A,Y)=\Ext(Y,A)=\Ext(B,Y)=\Ext(Y,B)=\Ext(Y,Y)=\Ext(A,B)=0$. Indeed, suppose, for example, that we have a non-trivial exact sequence $0\to A \to U \to Y\to 0$. Then $X$ is a degeneration of $U\oplus B$, hence $U\oplus B$ is not an element in $\mathcal{Z}$. But $\dim\Ext(U\oplus B,S_j)= \dim\Ext(B,S_j)>0$, for any $j=1,\dots,k$, which implies $U\oplus B \in \mathcal{Z}$, a contradiction. This proves $\Ext(Y,A)=0$, and the other claims are analogous. Summarizing, we have $k=\dim\Ext(X,X)=\dim\Ext(B,A)>0$.

Next, we claim that $Y\in \op{add}(T)$. Since $Q$ is Dynkin, using that an indecomposable has no self-extensions, we see from Lemma \ref{lem:ext} that we can reach $T$ from $X$ by a sequence of minimal degenerations given by short exact sequences. However, since $\Ext(A,Y)=\Ext(Y,A)=\Ext(B,Y)=\Ext(Y,B)=\Ext(Y,Y)=0$, $Y$ is going to remain fixed in this sequence of short exact sequences. Hence when we reach $T$, we get that $Y$ is a direct summand of $T$, so we can write $Y=T_1^{\beta_1}\oplus\dots\oplus T_m^{\beta_m}$, with $0\leq \beta_i\leq \lambda_i$, for $i=1,\dots,m$. Also $\underline{d}(A)+\underline{d}(B)=(\lambda_1-\beta_1)\underline{d}(T_1)+\dots+(\lambda_m-\beta_m)\underline{d}(T_m)$.

If $Q$ is of type $A_n$, the dimension of the space of maps between any two indecomposables is at most $1$, contradicting $\dim\Hom(A,S_1)>1$. Hence we may assume $N(Q)=2$. Then we claim that $\beta_i\geq 1$, for all $i=1,\dots,m$. Assume the contrary, say $\beta_1=0$. Then $\langle \underline{d}(A)+\underline{d}(B),\underline{d}(A)+\underline{d}(B)\rangle \geq \lambda_1^2 \dim\Hom(T_1,T_1)\geq 4$. On the other hand, we also have $\langle \underline{d}(A)+\underline{d}(B),\underline{d}(A)+\underline{d}(B)\rangle=2+\langle \underline{d}(A),\underline{d}(B)\rangle+\langle \underline{d}(B),\underline{d}(A)\rangle$, hence the inequality $\langle \underline{d}(A),\underline{d}(B)\rangle+\langle\underline{d}(B),\underline{d}(A)\rangle\geq 2$. We claim that this is impossible for Dynkin quivers. Indeed, we can use reflection functors (see \cite{elements}) to reduce to the case when $A\cong S^x$ is a simple corresponding to a vertex $x$. Then $B$ is not isomorphic to the simple $S^x$, and the value becomes $\langle \underline{d}(S^x),\underline{d}(B)\rangle+\langle \underline{d}(B),\underline{d}(S^x)\rangle= 2\dim B(x)-\dim B(y_1)-\dots - \dim B(y_i)$, where $y_i$ are all the neighbors of $x$. But for Dynkin quivers this value is known to be smaller than $2$ (see \cite[(4) on Page 4]{quad}). Hence $\beta_i\geq 1$, for all $i=1,\dots,m$. Since $\Ext(A,Y)=\Ext(Y,A)=\Ext(B,Y)=\Ext(Y,B)=0$, this implies that $X\in \mathcal{Z'}(Q,\alpha)$, which is a contradiction, by Proposition \ref{prop:reduced}. Hence $X$ satisfies the first independent gradient condition (\ref{jacob} a), that is, $\dim(X,S_j)=1$, for $j=1,\dots,k$.

Now we show that $X$ satisfies the independence condition (\ref{jacob} b). Take the exact sequences $0\to A_l \to C_l \to B_l\to 0$ as before. These induce sequences 
\[0\rightarrow X\rightarrow X\oplus X_l\rightarrow X\rightarrow 0\]
that satisfy the second indenpendent gradient condition, since $\dim \Hom(X\oplus X_l,S_j)=2-\delta_{jl}$, where $j=1,\dots, k$. Hence each irreducible component of $\mathcal{Z}$ is reduced, finishing the inductive step.
\end{proof}

\vspace{0.05in}

\begin{remark}
We note that for type $A$ quivers the above result also follows from \cite{ryan}, as the fact that (in characteristic $0$) the nullcone has rational singularities (see Section \ref{sec:bern}).
\end{remark}

\vspace{0.05in}

The following example shows that the nullcone of a Dynkin quiver is not always reduced.

\vspace{0.05in}

\begin{example}\label{ex:notred}
Let $Q$ be $E_8$ with the following orientation and dimension vector:

\[\xymatrix{
& & 3 \ar[d] & & & &\\
2 \ar[r] & 4 \ar[r] & 7 & 4 \ar[l] & 3 \ar[l] & 2 \ar[l] &1 \ar[l]
}\]

The decomposition of the generic representation is as follows: 

\[\arraycolsep=1.5pt
\begin{array}{ccccccc}
 &  & 3 & & & & \\
2&4 & 7 &4&3&2&1 
\end{array}=
\begin{array}{ccccccc}
 &  & 1 & & & & \\
0&1 & 2 &1&1&1&1 
\end{array}
\oplus
\begin{array}{ccccccc}
 &  & 1 & & & & \\
1&2 & 3 &2&1&1&0 
\end{array}
\oplus
\begin{array}{ccccccc}
 &  & 1 & & & & \\
1&1 & 2 &1&1&0&0 
\end{array}
\]

The simples in the right orthogonal category are the following:

\[\arraycolsep=1.5pt
\begin{array}{ccccccc}
 &  & 0 & & & & \\
0&0 & 1 &1&1&1&1 
\end{array}\,,\,
\begin{array}{ccccccc}
 &  & 1 & & & & \\
0&1 & 2 &1&1&1&0 
\end{array}\,,\,
\begin{array}{ccccccc}
 &  & 0 & & & & \\
1&1 & 1 &0&0&0&0 
\end{array}\,,\,
\begin{array}{ccccccc}
 &  & 1 & & & & \\
0&0 & 1 &1&0&0&0 
\end{array}\,,\,
\begin{array}{ccccccc}
 &  & 0 & & & & \\
0&1 & 1 &1&1&0&0 
\end{array}\]

A routine computation shows that the nullcone consists of $9$ irreducible components, each the closure of one of the following representations:
\[\begin{aligned}
N_1 &= \arraycolsep=1.5pt \begin{array}{ccccccc}
 & &0& & & & \\
0&0&1&0&0&0&0 
\end{array}
\oplus
\begin{array}{ccccccc}
 & &3& & & & \\
2&4&6&4&3&2&1 
\end{array}\\
N_2 &= \arraycolsep=1.5pt \begin{array}{ccccccc}
 & &1& & & & \\
0&0&1&1&0&0&0 
\end{array}
\oplus
\begin{array}{ccccccc}
 & &0& & & & \\
0&0&1&1&1&0&0 
\end{array}
\oplus
\begin{array}{ccccccc}
 & &1& & & & \\
0&1&1&0&0&0&0 
\end{array}
\oplus
\begin{array}{ccccccc}
 & &0& & & & \\
1&1&1&0&0&0&0 
\end{array}
\oplus
\begin{array}{ccccccc}
 & &1& & & & \\
1&2&3&2&2&2&1 
\end{array}\\
N_3 &= \arraycolsep=1.5pt \begin{array}{ccccccc}
 & &0& & & & \\
1&1&1&0&0&0&0 
\end{array}
\oplus
\begin{array}{ccccccc}
 & &0& & & & \\
0&0&1&1&1&1&1 
\end{array}
\oplus
\begin{array}{ccccccc}
 & &1& & & & \\
0&1&2&1&0&0&0 
\end{array}
\oplus
\begin{array}{ccccccc}
 & &2& & & & \\
1&2&3&2&2&1&0 
\end{array}\\
N_4 &= \arraycolsep=1.5pt \begin{array}{ccccccc}
 & &1& & & & \\
0&0&1&1&0&0&0 
\end{array}
\oplus
\begin{array}{ccccccc}
 & &0& & & & \\
0&0&1&1&1&1&1 
\end{array}
\oplus
\begin{array}{ccccccc}
 & &0& & & & \\
0&1&1&0&0&0&0 
\end{array}
\oplus
\begin{array}{ccccccc}
 & &1& & & & \\
1&1&2&1&1&1&0 
\end{array}
\oplus
\begin{array}{ccccccc}
 & &1& & & & \\
1&2&2&1&1&0&0 
\end{array}\\
N_5 &= \arraycolsep=1.5pt \begin{array}{ccccccc}
 & &1& & & & \\
0&0&1&0&0&0&0 
\end{array}
\oplus
\begin{array}{ccccccc}
 & &0& & & & \\
0&0&1&1&1&1&1 
\end{array}
\oplus
\begin{array}{ccccccc}
 & &0& & & & \\
0&1&1&1&1&0&0 
\end{array}
\oplus
\begin{array}{ccccccc}
 & &0& & & & \\
1&1&1&0&0&0&0 
\end{array}
\oplus
\begin{array}{ccccccc}
 & &2& & & & \\
1&2&3&2&1&1&0 
\end{array}\\
N_6 &= \arraycolsep=1.5pt \begin{array}{ccccccc}
 & &1& & & & \\
0&0&1&1&0&0&0 
\end{array}
\oplus
\begin{array}{ccccccc}
 & &0& & & & \\
0&0&1&0&0&0&0 
\end{array}
\oplus
\begin{array}{ccccccc}
 & &0& & & & \\
0&0&1&1&1&1&0 
\end{array}
\oplus
\begin{array}{ccccccc}
 & &1& & & & \\
0&1&1&0&0&0&0 
\end{array}
\oplus
\begin{array}{ccccccc}
 & &0& & & & \\
1&1&1&0&0&0&0 
\end{array}
\oplus
\begin{array}{ccccccc}
 & &1& & & & \\
1&1&2&1&1&1&1 
\end{array}\\
N_7 &= \arraycolsep=1.5pt \begin{array}{ccccccc}
 & &1& & & & \\
0&0&1&1&0&0&0 
\end{array}
\oplus
\begin{array}{ccccccc}
 & &0& & & & \\
0&0&1&1&1&1&1 
\end{array}
\oplus
\begin{array}{ccccccc}
 & &0& & & & \\
0&1&1&1&1&0&0 
\end{array}
\oplus
\begin{array}{ccccccc}
 & &1& & & & \\
0&1&2&1&1&1&0 
\end{array}
\oplus
\begin{array}{ccccccc}
 & &0& & & & \\
1&1&1&0&0&0&0 
\end{array}
\oplus
\begin{array}{ccccccc}
 & &1& & & & \\
1&1&1&0&0&0&0 
\end{array}\\
N_8 &= \arraycolsep=1.5pt \begin{array}{ccccccc}
 & &0& & & & \\
1&1&1&0&0&0&0 
\end{array}
\oplus
\begin{array}{ccccccc}
 & &0& & & & \\
0&0&1&1&0&0&0 
\end{array}
\oplus
\begin{array}{ccccccc}
 & &2& & & & \\
1&2&4&3&3&2&1 
\end{array}
\oplus
\begin{array}{ccccccc}
 & &1& & & & \\
0&1&1&0&0&0&0 
\end{array}\\
N_9 &= \arraycolsep=1.5pt \begin{array}{ccccccc}
 & &1& & & & \\
0&0&1&1&0&0&0 
\end{array}
\oplus
\begin{array}{ccccccc}
 & &0& & & & \\
0&0&1&1&1&1&1 
\end{array}
\oplus
\begin{array}{ccccccc}
 & &1& & & & \\
0&1&2&1&1&0&0 
\end{array}
\oplus
\begin{array}{ccccccc}
 & &0& & & & \\
1&1&1&0&0&0&0 
\end{array}
\oplus
\begin{array}{ccccccc}
 & &1& & & & \\
1&2&2&1&1&1&0 
\end{array}
\end{aligned}\]

If we look at the first component $N_1$, we see that indeed the nullcone is not reduced, since 
\[\arraycolsep=1.5pt \dim \Hom\left( \begin{array}{ccccccc}
 & &0& & & & \\
0&0&1&0&0&0&0 
\end{array}\,,\,
\begin{array}{ccccccc}
 &  & 1 & & & & \\
0&1 & 2 &1&1&1&0 
\end{array}\right) = 2,
\]
contradicting the independent gradient condition (\ref{jacob} a). We also note that the nullcone is a set-theoretic complete intersection, since the codimension of each component is $\dim \Ext(N_i,N_i)=5$, for all $i=1,\dots, 9$.
\end{example}

\vspace{0.05in}

\section{Bernstein-Sato polynomials and rational singularities of zero sets}
\label{sec:bern}

In this section, we will work over the complex field $k=\complex$. 
Let $V$ be a vector space, $k[V]$ the polynomial ring and $\mathcal{D}_V$ the ring of (polynomial) differential operators on $V$. 


First, we have recall the notion of Bernstein-Sato polynomials for ideals (see \cite{bms}). 

Consider polynomials $f_1,\dots, f_r$ in $k[V]$ and the ideal $I=(f_1,\dots,f_r)$. Denote by $\underline{e}^1,\dots, \underline{e}^r$ the standard basis for $\integ^r$, and put $\underline{e}=\underline{e}^1+\dots+\underline{e}^r$. For $\underline{c}=(c_1,\dots, c_r)\in \integ^{r}$, put $I(\underline{c})^{-}=\{i : c_i < 0\}$ and $I(\underline{c})^{+}=\{i : c_i > 0\}$. Let $s_1,\dots, s_r$ be independent variables, and put $s=s_1+\dots+s_r$. 
The Bernstein-Sato polynomial $b(s)$ is the monic polynomial of smallest degree such that $b(s)\prod_i f^{s_i}$ belongs to the $\mathcal{D}_V[s_1,\dots,s_r]$-submodule generated by the elements
\begin{equation}
\prod_{i\in I(\underline{c})^-} \binom{s_i}{-c_i}\prod_{i=1}^r f_i^{s_i+c_i},
\label{eq:bern}
\end{equation}
where $\underline{c}$ runs over elements in $\integ^r$ with dot product $\underline{e}\cdot\underline{c}=1$, and $\binom{s_i}{m}=s_i (s_i-1)\cdots (s_i-m+1)/m!$.


The roots of $b(s)$ are negative and rational. Let $Z$ be the (not necessarily reduced) variety defined by $I$. We have the following (see \cite{saito} for the hypersurface case):
\begin{theorem}[{\cite[Theorem 4]{bms}}]\label{thm:ratio}
Assume $Z$ is a reduced complete intersection of codimension $r$. Then $Z$ has rational singularities if and only if $-r$ is the largest root of $b(s)$ and its multiplicity is $1$.
\end{theorem}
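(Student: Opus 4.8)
The statement I am asked to justify is Theorem~\ref{thm:ratio}, which is cited from \cite{bms} as ``\cite[Theorem 4]{bms}''. Since the excerpt presents it as an external input, the honest plan is to indicate how one would \emph{recall} or \emph{reconstruct} its proof rather than invent a new one; the argument below is the standard one from the theory of Bernstein--Sato polynomials of ideals. The overarching strategy is to reduce the complete intersection case to Saito's hypersurface criterion via the graph embedding, and to translate the condition ``$Z$ has rational singularities'' into a statement about the $V$-filtration on the structure sheaf of the ambient space.

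First I would set up the graph construction: given $f_1,\dots,f_r$ cutting out the reduced complete intersection $Z\subset V$ of codimension $r$, consider the map $f=(f_1,\dots,f_r)\colon V\to \complex^r$ and its graph $\Gamma_f\subset V\times \complex^r$, with $t_1,\dots,t_r$ the coordinates on $\complex^r$. One knows (Budur--Musta\c{t}\u{a}--Saito) that the multivariable Bernstein--Sato polynomial $b(s)$ defined via the operators in (\ref{eq:bern}) coincides, after the substitution $s=s_1+\dots+s_r$, with the minimal polynomial of the action of $-\partial_{t_1}t_1-\dots-\partial_{t_r}t_r$ on a suitable graded piece of the $V$-filtration along $\complex^r$ of the $\mathcal{D}$-module pushforward of $\mathcal{O}_V$ (or equivalently of $\mathcal{O}_V[\prod f_i^{-1}]\prod f_i^{s_i}$ localized appropriately). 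The point is that this translates $b(s)$ into monodromy/$V$-filtration data for a single function, namely $g=t_1 f_1+\dots+t_r f_r$ (or the coordinate cross-section), at which stage Saito's hypersurface result on microlocal multiplier ideals and rational singularities applies. Concretely: $Z$ has rational singularities if and only if the pair $(V\times\complex^r, \{g=0\})$ — equivalently the cone construction — has the relevant log-canonical-threshold/jumping-number equal to $r$ with the correct multiplicity, and this jumping number is read off as the negative of the largest root of $b(s)$.

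The key steps, in order, are: (1) reduce to the case where $Z$ is reduced and a complete intersection, so that the Koszul complex on $f_1,\dots,f_r$ resolves $\mathcal{O}_Z$ and one has a clean comparison between $\mathcal{O}_Z$ and the localization $\mathcal{O}_V[\prod f_i^{-1}]$; (2) invoke the dictionary between $b(s)$ and $V$-filtrations to identify $-r$ as a root of $b(s)$ unconditionally (this is the complete-intersection analogue of the fact that $-1$ is always a root of the usual $b$-function when $Z\neq V$, and it uses that $\op{codim} Z = r$ exactly), hence the largest root is always $\geq -r$ in the sense of being $\leq$ in absolute value is not quite it — rather one shows the largest root is $\leq -r$ is false and pins down that it is $\geq -r$; (3) characterize rational singularities of $Z$ via the statement that the natural map $\mathcal{O}_V \to R\Gamma$ of the resolution, or the adjoint $\mathbf{R}\mathcal{H}om$, is an isomorphism in the appropriate degree, and match this with the condition that $-r$ is not exceeded by any root and occurs with multiplicity exactly $1$; (4) handle the multiplicity-one clause by the standard argument that a root of multiplicity $\geq 2$ at the extreme value forces a nontrivial extension in the mixed Hodge structure on the nearby/vanishing cohomology, obstructing the surjectivity that rational singularities require.

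The main obstacle is step (2)--(3): making precise the equivalence between ``rational singularities of $Z$'' and ``$-r$ is the largest root with multiplicity $1$'' requires the full machinery of Budur--Musta\c{t}\u{a}--Saito relating the multivariable Bernstein--Sato polynomial to the $V$-filtration and to multiplier ideals of the ideal $I$ — in particular the identification of $-\lct(I)$ with the largest root and the complete-intersection refinement that upgrades the log canonical threshold statement to a rational singularities statement, where the codimension bound $\op{codim} Z=r$ is exactly what makes $\lct(I)\leq r$ with equality detecting rational (as opposed to merely log canonical) singularities. Since this is precisely the content of \cite[Theorem 4]{bms}, in the paper I would simply cite it; the sketch above records the shape of the argument for the reader's orientation. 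I would also note the hypersurface case $r=1$ recovers Saito's theorem \cite{saito}, which serves as both the prototype and, via the graph embedding, the engine of the general proof.
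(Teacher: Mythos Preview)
The paper does not prove this theorem at all: it is stated as an external input, cited verbatim as \cite[Theorem~4]{bms}, with no accompanying argument. Your proposal correctly recognizes this and defers to the citation, so there is no ``paper's own proof'' to compare against; the orientation sketch you provide goes beyond what the paper does, but is consistent with how the result is actually established in \cite{bms} (via the $V$-filtration/graph-embedding dictionary reducing to Saito's hypersurface criterion \cite{saito}).
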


Now we recall the $b$-function of several variables. Assume $G$ is a reductive group and $V$ is a prehomogeneous vector space under the action of $G$, that is, there exists a dense orbit. Assume $f_1,\dots,f_r$ are semi-invariants, and let $f_1^*,\dots , f_r^*$ their respective dual semi-invariants (of opposite weight) viewed as differential operators in $\mathcal{D}_V$ (see \cite[Lemma 1.5]{gyoja}). Put $\underline{s}=(s_1,\dots ,s_r)$ independent variables as above. For any $r$-tuple $\underline{m}=(m_1,\dots,m_r)$ of non-negative integers, the $b$-function (of several variables) is the polynomial $b_{\underline{m}}(\underline{s})$ obtained by
\begin{equation}
\prod_{i=1}^r f_i^{*m_i}\cdot \prod_{i=1}^r f_i^{s_i+m_i} =b_{\underline{m}}(\underline{s})\prod_{i=1}^r f_i^{s_i}.
\label{eq:bfun}
\end{equation}

It is known that $b_{\underline{m}}(\underline{s})$ is a product of linear polynomials (\cite{sata, ukai}) and has an expression (up to a constant) of the form:

\begin{equation}
b_{\underline{m}}(\underline{s})= \prod_{j=1}^N \prod_{k=1}^{\mu_j}\prod_{i=0}^{\gamma^j\cdot \underline{m}-1} (\gamma^j\cdot \underline{s} + \alpha_{j,k}+i),
\label{eq:form}
\end{equation}

where $N\in \nat$, $\mu_j\in \nat$, $\gamma^j\in \nat^r$ and $\alpha_{j,k}\in \rat_+$.

\vspace{0.05in}

The first result of this section is a link between $b_{\underline{m}}(\underline{s})$ and $b(s)$, beyond the case $r=1$ when the two notions coincide (cf. \cite{gyoja}). For $\underline{c}\in \integ^r$, define $\underline{c}^{+}\in \nat^r$ by $\underline{c}^{+}_i:= \max\{c_i,0\}$ and $\underline{c}^{-}:=\underline{c}-\underline{c}^{+}$, and denote by $b_{\underline{c}}\in k[s_1,\dots,s_r]$ the polynomial
$$b_{\underline{c}}:=b_{\underline{c}^+}(\underline{s}+\underline{c}^{-})\cdot \prod_{i\in I(\underline{c})^-} \binom{s_i}{-c_i}.$$

Let $\tilde{B}$ be the ideal generated by polynomials $b_{\underline{c}}$, where $\underline{c}$ runs over the elements in $\integ^r$ with $\underline{e}\cdot \underline{c}=1$.

\begin{proposition}
\label{prop:link}
There exists a polynomial in $\tilde{B}$ depending only in $s=s_1+\dots+s_r$; let $\tilde{b}(s)$ be such of lowest degree. We have $b(s) \, | \, \tilde{b}(s)$.
\end{proposition}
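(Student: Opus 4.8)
The plan is to prove the proposition in three steps: an explicit operator identity relating each $b_{\underline{c}}$ to the generators of (\ref{eq:bern}), the formal deduction of $b(s)\mid\tilde{b}(s)$, and finally the existence of $\tilde{b}(s)$, which is where the real work lies. First I would establish that for every $\underline{c}\in\integ^r$ with $\underline{e}\cdot\underline{c}=1$ one has
\[
b_{\underline{c}}(\underline{s})\cdot\prod_{i=1}^r f_i^{s_i}\;=\;\Bigl(\,\prod_{i\in I(\underline{c})^-}f_i^{-c_i}\cdot\prod_{i=1}^r f_i^{*c_i^+}\,\Bigr)\cdot G_{\underline{c}},
\]
where $G_{\underline{c}}=\prod_{i\in I(\underline{c})^-}\binom{s_i}{-c_i}\prod_{i=1}^r f_i^{s_i+c_i}$ is the generator appearing in (\ref{eq:bern}) and the operator in brackets lies in $\mathcal{D}_V$. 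To see this, start from (\ref{eq:bfun}) with $\underline{m}$ replaced by $\underline{c}^+$ and substitute $\underline{s}\mapsto\underline{s}+\underline{c}^-$ (a legitimate integer translation, since $\underline{c}^-\in(-\nat)^r$); using $(s_i+c_i^-)+c_i^+=s_i+c_i$ this becomes $\prod_i f_i^{*c_i^+}\cdot\prod_i f_i^{s_i+c_i}=b_{\underline{c}^+}(\underline{s}+\underline{c}^-)\prod_i f_i^{s_i+c_i^-}$. Left-multiplying by the polynomial $\prod_{i\in I(\underline{c})^-}f_i^{-c_i}$ cancels the negative powers on the right and leaves $b_{\underline{c}^+}(\underline{s}+\underline{c}^-)\prod_i f_i^{s_i}$; multiplying further by $\prod_{i\in I(\underline{c})^-}\binom{s_i}{-c_i}$, a polynomial in the central variables $s_i$ that therefore commutes with the $f_i^*$, reassembles $G_{\underline{c}}$ on the left and $b_{\underline{c}}(\underline{s})\prod_i f_i^{s_i}$ on the right. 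In particular $b_{\underline{c}}(\underline{s})\prod_i f_i^{s_i}$ lies in the $\mathcal{D}_V[\underline{s}]$-submodule $M$ generated by the elements (\ref{eq:bern}).

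The divisibility is then immediate. Because $\complex[s]\subseteq\mathcal{D}_V[\underline{s}]$ is central, $\mathcal{I}:=\{\,p(s)\in\complex[s]:p(s)\prod_i f_i^{s_i}\in M\,\}$ is an ideal of $\complex[s]$; by definition it contains $b(s)\ne 0$, so it equals $(b(s))$ with $b(s)$ its monic generator. Any element of $\tilde{B}$ is a $\complex[\underline{s}]$-linear combination of the $b_{\underline{c}}$, and since scalar polynomials commute past $\mathcal{D}_V$, the identity above shows $\beta(\underline{s})\prod_i f_i^{s_i}\in M$ for every $\beta\in\tilde{B}$. Applying this to $\beta=\tilde{b}(s)$ gives $\tilde{b}(s)\in\mathcal{I}=(b(s))$, that is $b(s)\mid\tilde{b}(s)$.

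It remains to produce a nonzero element of $\tilde{B}$ in $\complex[s]$. By (\ref{eq:form}), together with the fact that the binomial factors $\binom{s_i}{-c_i}$ split into forms $s_i-j$, each $b_{\underline{c}}$ is a product of affine-linear forms whose linear parts lie in the finite set $\{\gamma^1,\dots,\gamma^N,\underline{e}^1,\dots,\underline{e}^r\}$; hence $V(\tilde{B})$ is a finite union of rational affine-linear subspaces. If $s=s_1+\dots+s_r$ were constant on each of them it would take finitely many values on $V(\tilde{B})$, and a power of the product of the corresponding $(s-\text{value})$ would lie in $\tilde{B}$, which is what we want; so it suffices to exclude the existence of $\xi\in\integ^r$ with $\underline{e}\cdot\xi\ne 0$ admitting a line $L=p_0+\complex\xi\subseteq V(\tilde{B})$. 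Given such a line, testing $b_{\underline{c}}$ for $\underline{c}=\underline{e}^l$ shows that for each $l$ some $\gamma^j$ with $\gamma^j_l\ge 1$ satisfies $\gamma^j\cdot\xi=0$; a short sign argument then forces $\xi$ to have both positive entries, on a set $P_+$ with $T_+:=\sum_{i\in P_+}\xi_i$, and negative entries, on a set $P_-$ with $T_-:=\sum_{i\in P_-}|\xi_i|$, and $T_+\ne T_-$ — say $T_+>T_-$ after replacing $\xi$ by $-\xi$ if needed. I would then take, for large $\sigma$, the admissible $\underline{c}=\underline{m}-\underline{n}$ with $\underline{m}\in\nat^r$ supported on $P_+$ of entry sum $\sigma$ and roughly proportional to $(\xi_i)_{i\in P_+}$, and $\underline{n}\in\nat^r$ supported on $P_-$ of entry sum $\sigma-1$ and roughly proportional to $(|\xi_i|)_{i\in P_-}$. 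No binomial factor of $b_{\underline{c}}$ vanishes identically on $L$ (their restrictions to $L$ have nonzero leading coefficient, since $\xi_i\ne 0$ on $P_-$), and a factor of $b_{\underline{m}}(\underline{s}-\underline{n})$ attached to some $\gamma^j$ can vanish on $L$ only if $\gamma^j\cdot\xi=0$ and the forced shift index lies in $\{0,\dots,\gamma^j\cdot\underline{m}-1\}$, i.e. $\gamma^j\cdot\underline{n}-\gamma^j\cdot\underline{m}$ stays bounded as $\sigma\to\infty$; but for a contributing $\gamma^j$ with $\gamma^j\cdot\xi=0$ one has $\sum_{i\in P_+}\gamma^j_i\xi_i=\sum_{i\in P_-}\gamma^j_i|\xi_i|>0$, whence $\gamma^j\cdot\underline{n}-\gamma^j\cdot\underline{m}\approx\sigma\bigl(T_-^{-1}-T_+^{-1}\bigr)\sum_{i\in P_+}\gamma^j_i\xi_i\to+\infty$. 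Hence for $\sigma$ large enough $b_{\underline{c}}$ does not vanish on $L$, contradicting $L\subseteq V(\tilde{B})$. I expect this final estimate — carrying out the window/range bookkeeping uniformly over the finitely many $\gamma^j$ and $\alpha_{j,k}$ — to be the main technical obstacle; everything else is manipulation of (\ref{eq:bfun}) and of the module $M$.
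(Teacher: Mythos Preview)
Your argument is correct, and the divisibility part is exactly what the paper does: the same operator identity
\[
\prod_{i\in I(\underline{c})^-} f_i^{-c_i}\prod_{i\in I(\underline{c})^+} f_i^{*c_i}\cdot G_{\underline{c}} \;=\; b_{\underline{c}}\,\prod_i f_i^{s_i}
\]
gives $b_{\underline{c}}\prod_i f_i^{s_i}\in M$, hence $\tilde{B}\cap\complex[s]\subset (b(s))$.

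For the existence of $\tilde{b}(s)$ you and the paper take genuinely different routes to the same endpoint. The paper argues via prime avoidance: every minimal prime over $\tilde{B}$ has the form $I=(\underline{l}^1\cdot\underline{s}+u_1,\dots,\underline{l}^k\cdot\underline{s}+u_k)$ with $\underline{l}^i\in L$, and one must show $\underline{e}\in\op{span}_{\rat}\{\underline{l}^i\}$. After an induction on $r$ that strips off any $\underline{e}^i$ among the $\underline{l}^j$, the paper invokes Farkas' lemma to produce $\underline{c}\in\integ^r$ with $\underline{e}\cdot\underline{c}=1$ and all $\underline{l}^i\cdot\underline{c}$ arbitrarily negative; this forces every factor of $b_{\underline{c}}$ with linear part $\underline{l}^i$ to have constant term too small to match $u_i$, so $b_{\underline{c}}\notin I$. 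Your approach is the geometric dual: you work directly with a line $L=p_0+\complex\xi\subset V(\tilde{B})$ on which $s$ is nonconstant, extract the sign pattern of $\xi$ from the vanishing of the $b_{\underline{e}^l}$, and then build $\underline{c}=\underline{m}-\underline{n}$ by hand so that for every relevant $\gamma^j$ the shift $\gamma^j\cdot\underline{n}-\gamma^j\cdot\underline{m}$ escapes the admissible window. The paper's use of Farkas is cleaner and avoids your final bookkeeping (which you correctly flag as the main technicality), while your argument is more self-contained and makes the mechanism --- pushing the constant term of each linear factor past the window $\{0,\dots,\gamma^j\cdot\underline{m}-1\}$ --- completely explicit; in particular your treatment of the binomial factors via $\xi_i\neq 0$ on $P_-$ replaces the paper's induction on $r$.
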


\begin{proof}
The second part of the proposition follows from
$$\prod_{i\in I(\underline{c})^-}^r f_i^{-c_i}\prod_{i\in I(\underline{c})^+}^r f_i^{*c_i}\cdot \prod_{i\in I(\underline{c})^-} \binom{s_i}{-c_i}\prod_{i=1}^r f_i^{s_i+c_i} = b_{\underline{c}} \cdot \prod_{i=1}^r f_i^{s_i}.$$

Put $L=\{\gamma^1,\dots, \gamma^N\}\cup \{\underline{e}^1,\dots,\underline{e}^r\}$. Choose arbitrary elements $\underline{l}^1,\dots, \underline{l}^k\in L$ and $u_1,\dots, u_k\in \rat$ and take the ideal $I=(\underline{l}^1\cdot\underline{s}+u_1,\dots,\underline{l}^k\cdot \underline{s}+u_k)$, and assume $I$ is a proper ideal. Since $\tilde{B}$ is finitely generated, in order to prove the first part of the proposition, it is enough to show that if $\tilde{B}\subset I$, then $\underline{e}\in\op{span}_{\rat} \{\underline{l}^1,\dots, \underline{l}^k\}$. WLOG, we can assume that for any $\underline{l}\in L$ with $\underline{l}\in\op{span}_{\rat}\{\underline{l}^1,\dots,\underline{l}^k\}$ we have $\underline{l}\in\{\underline{l}^1,\dots,\underline{l}^k\}$. Arguing by induction on $r$, we can further assume WLOG that there are no basis elements $\underline{e}^i$ among the vectors $\underline{l}^1,\dots, \underline{l}^k$. Then, we show that in fact $\underline{e}\in\op{span}_{\rat_+} \{\underline{l}^1,\dots, \underline{l}^k\}$. Assuming the contrary, there exists by Farkas' lemma a vector $\underline{c}\in \integ^r$ such that $\underline{e}\cdot \underline{c}>0$, and $\underline{l}^i \cdot \underline{c}<0$, for all $i=1,\dots, k$. Since $\underline{l}^i \in \nat^r$, we can, by possibly scaling and decreasing the entries of $\underline{c}$, find a vector $\underline{c}\in \integ^r$ with $\underline{e}\cdot \underline{c}=1$ and $\max_i \{\underline{l}^i \cdot \underline{c} \}$ arbitrary small. Hence, looking at largest constant terms of the factors in (\ref{eq:form}) we can find $\underline{c}$ such that none of the forms $\underline{l}^1\cdot\underline{s}+u_1,\dots,\underline{l}^k\cdot \underline{s}+u_k$ is a factor of $b_{\underline{c}}$, which gives $b_{\underline{c}}\notin I$, a contradiction.
\end{proof}

\begin{remark}
The $b$-function of several variables (\ref{eq:bfun}) has been generalized to the case of arbitrary (not necessarily semi-invariant) polynomials by \cite{gyoja2,sab}. Proposition \ref{prop:link} can be adapted to this setting as well. In particular, this gives another proof for the existence of $b(s)$ and rationality of its roots (see \cite{bms}). However, in general $\tilde{b}(s)\neq b(s)$, moreover $\tilde{b}(s)$ may have positive roots (see Example \ref{ex:pos}).
\end{remark}

\vspace{0.05in}

Now, as in the previous section, let $Q$ be a quiver with $n$ vertices, $\alpha$ a prehomogeneous dimension vector with generic representation $T=T_1^{\oplus\lambda_1}\oplus \dots \oplus T_m^{\oplus\lambda_m}$. Let $r=n-m$, and $S_{m+1},\dots,S_{n}$ the simple objects in $T^\perp$ with dimension vectors $\beta^{1},\dots,\beta^r$, respectively. For a fixed tuple $\underline{m}\in \nat^r$, denote by $b_\alpha$ the $b$-function of several variables $b_{\underline{m}}(\underline{s})$ of the semi-invariants $c_{S_{m+1}},\dots,c_{S_{n}}$. Let $c=-E^{-1}E^t$ be the Coxeter transformation, where $E$ is the Euler matrix of $Q$ (see \cite{elements}). 

We assume throughout that $c(\alpha)\in \nat^n$. We have the following formula by \cite[Theorem 5.3]{lol}

\begin{equation}
\mathlarger{b_{\alpha}=b_{c(\alpha)}\prod_{x\in Q_0} \dfrac{\displaystyle[s]^{\beta^1_x,\dots,\beta^r_x}_{\alpha_x}}{\displaystyle[s]^{\beta^1_x,\dots,\beta^r_x}_{c(\alpha)_x}}}.
\label{eq:refl}
\end{equation}

Put $S:=S_i$ for some $i\in\{m+1,\dots, n\}$ and $\beta$ its dimension vector. Following \cite{openorb}, we call $\alpha$ an $S$-\itshape stable \normalfont dimension vector if $T_i^{++}$ is a direct summand of $T$ (see (\ref{eq:defin})). By \cite{openorb}, in such case the zero set $Z(c^{S})$ is the closure of a codimension $1$ orbit. Let $\tau$ be the Auslander-Reiten translation (see \cite{elements}).

\begin{proposition}
Assume $\alpha$ is $S$-stable and $c(\alpha)$ is $\tau S$-stable (assume $S$ is not projective). Then $Z(c^{S})$ has rational singularities iff $Z(c^{\tau S})$ has rational singularities.
\end{proposition}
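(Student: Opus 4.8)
The plan is to exploit the reflection formula \eqref{eq:refl} in the one-variable case $r=1$, where the $b$-function of several variables $b_{\underline m}(\underline s)$ reduces to the classical Bernstein--Sato polynomial of the hypersurface (cf. \cite{gyoja}). Since $Z(c^S)$ and $Z(c^{\tau S})$ are both closures of codimension $1$ orbits by \cite{openorb} (using that $\alpha$ is $S$-stable and $c(\alpha)$ is $\tau S$-stable), they are reduced hypersurfaces, so Theorem \ref{thm:ratio} with $r=1$ applies: $Z(c^S)$ has rational singularities iff $-1$ is the largest root of its Bernstein--Sato polynomial $b^S(s)$ with multiplicity one, and similarly for $Z(c^{\tau S})$ with $b^{\tau S}(s)$. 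The key point is that $b^S$ and $b^{\tau S}$ are related via \eqref{eq:refl}.

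First I would record that the semi-invariant $c^S$ on $\Rep(Q,\alpha)$ and the semi-invariant $c^{\tau S}$ on $\Rep(Q,c(\alpha))$ are matched under the chain of reflection functors realizing the Coxeter transformation; concretely, formula \eqref{eq:refl} with $r=1$, $\underline m=(1)$ reads
\begin{equation}\label{eq:onevar}
b^S_\alpha(s)=b^{\tau S}_{c(\alpha)}(s)\cdot \prod_{x\in Q_0}\frac{[s]^{\beta_x}_{\alpha_x}}{[s]^{\beta_x}_{c(\alpha)_x}},
\end{equation}
where $\beta=\underline d(S)$ and where I am writing $b^S_\alpha$ for the one-variable $b$-function of $c^S$ on $\Rep(Q,\alpha)$. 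Here $\tau S$ is the simple object of $(c(\alpha))^\perp$ corresponding to $S$ under the reflection; the hypothesis that $S$ is not projective is exactly what makes $\tau S$ a genuine (non-projective) simple in the perpendicular category so that the right-hand side is again a $b$-function of the stated type. The correction factor $\prod_x [s]^{\beta_x}_{\alpha_x}/[s]^{\beta_x}_{c(\alpha)_x}$ is, by the Notation section, a ratio of products of linear forms $(s+i)$ with \emph{positive integer} shifts $i$.

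Next I would analyze the largest root on each side. By \eqref{eq:form} (with $r=1$), every root of $b^S_\alpha$ and of $b^{\tau S}_{c(\alpha)}$ is strictly negative; the factors $(s+i)$ appearing in the correction term have roots $-i$ with $i\ge 1$ a positive integer, and in particular the root $-1$ can only come from such a factor if $i=1$. The degenerate case $S$-stable forces a precise description of the exponents: one shows (using the $S$-stability of $\alpha$ together with Lemma \ref{lem:tech} and the structure of $T^{++}$, as in \cite{openorb}) that the factor $(s+1)$ appears in $b^S_\alpha$ with multiplicity exactly one and is the reduced-hypersurface normalization, and analogously $(s+1)$ appears in $b^{\tau S}_{c(\alpha)}$ with multiplicity one by $\tau S$-stability of $c(\alpha)$. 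Then \eqref{eq:onevar}, read as an equality of products of linear forms, lets me compare the ``extra'' roots: $Z(c^S)$ fails to have rational singularities iff $b^S_\alpha$ has a root strictly bigger than $-1$ (equivalently, with larger real part, i.e.\ in $(-1,0)$) or $-1$ with multiplicity $\ge 2$; I claim \eqref{eq:onevar} shows this happens for $\alpha$ iff it happens for $c(\alpha)$, because the correction factor contributes only roots $\le -1$ and contributes $-1$ with the same multiplicity on both sides after cancellation (the factors with $i\ge 2$ are irrelevant to the top root, and the net $(s+1)$-multiplicity is preserved since both normalizations are $1$).

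The main obstacle I anticipate is the bookkeeping of multiplicities at the root $-1$ in \eqref{eq:onevar}: a priori the correction factor $\prod_x [s]^{\beta_x}_{\alpha_x}/[s]^{\beta_x}_{c(\alpha)_x}$ could inject or cancel a factor $(s+1)$, and one must check that under the $S$-stability and $\tau S$-stability hypotheses this contribution is exactly balanced, so that the multiplicity of $-1$ in $b^S_\alpha$ and in $b^{\tau S}_{c(\alpha)}$ differ in a controlled way (in fact agree after accounting for the reduced normalization). This requires understanding precisely which $i=1$ factors occur in $[s]^{\beta_x}_{\alpha_x}$ versus $[s]^{\beta_x}_{c(\alpha)_x}$, i.e.\ comparing $\alpha_x$ and $c(\alpha)_x$ against $\beta_x$ at each vertex; the stability hypotheses are designed to guarantee the relevant inequalities. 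Once that is settled, combining \eqref{eq:onevar} with Theorem \ref{thm:ratio} in the hypersurface case gives the equivalence immediately.
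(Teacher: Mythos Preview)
Your strategy matches the paper's: specialize \eqref{eq:refl} to one variable and apply Theorem~\ref{thm:ratio}. But your reading of the correction factor is wrong: the linear factors of $[s]^{\beta_x}_{\alpha_x}$ are $(\beta_x s+i+j)$ for $1\le i\le\alpha_x$, $0\le j\le\beta_x-1$, \emph{not} $(s+i)$, and their roots $-(i+j)/\beta_x$ can lie strictly in $(-1,0)$ whenever $\beta_x\ge 2$. So the claim that the correction term ``contributes only roots $\le -1$'' is the heart of the matter, not a formality. Your separate attempt to show directly that $(s+1)$ has multiplicity exactly one in $b^S_\alpha$ via stability and Lemma~\ref{lem:tech} is unnecessary and, as written, circular: that multiplicity is part of the rational-singularities conclusion, not a consequence of reducedness.

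The paper's argument is the clean version of what you anticipate in your last paragraph: one shows $\alpha\ge\beta$ and $c(\alpha)\ge\beta$ vertexwise. Then in the ratio $[s]^{\beta_x}_{\alpha_x}/[s]^{\beta_x}_{c(\alpha)_x}$ the surviving factor is $[s]^{\beta_x}_{a,b}$ (or its reciprocal) with $a=\min\{\alpha_x,c(\alpha)_x\}\ge\beta_x$, so every remaining root $-(i+j)/\beta_x$ has $i\ge\beta_x+1$ and is therefore $<-1$. Hence $b^S_\alpha$ and $b^{\tau S}_{c(\alpha)}$ have the same roots (with multiplicities) in $[-1,0)$, and the criterion of Theorem~\ref{thm:ratio} transfers directly. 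For the inequalities: $S$-stability gives $T_i\hookrightarrow T_i^{++}\hookrightarrow T$ and $T_i\twoheadrightarrow\overline{T}_i\twoheadrightarrow S$, whence $\alpha\ge\underline{d}(T_i)\ge\beta$; the inequality $c(\alpha)\ge\beta$ follows dually from $\tau S$-stability of $c(\alpha)$ and $S\in{}^{\perp}(\tau T)$.
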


\begin{proof}
By (\ref{eq:refl}) and Theorem \ref{thm:ratio}, we see that it is enough to show that $\alpha \geq \beta$ and $c(\alpha) \geq \beta$. Since $\alpha$ is $S$-stable, we have the composite of injective maps $T_i \hookrightarrow T_i^{++} \hookrightarrow T$ and surjective maps $T_i \twoheadrightarrow \overline{T}_i\twoheadrightarrow S_i$, we get $\alpha \geq \beta$. Since $S\in ^\perp\!\!\!(\tau T)$ and $c(\alpha)$ is $\tau S$-stable we get dually that $c(\alpha)\geq \beta$.
\end{proof}

Note that if $S$ is projective, we reduce as in \cite[Proposition 5.4 b)]{lol} using reflection functors to the case when $S$ is simple, after which we obtain the root $-1$ of the $b$-function.

\begin{theorem}\label{thm:dynkin1}
If $Q$ is a Dynkin quiver, then all codimension $1$ orbit closures in $\Rep(Q,\alpha)$ have rational singularities.
\end{theorem}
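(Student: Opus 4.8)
The plan is to reduce the statement to a finite verification supported by the preceding machinery, using the formula (\ref{eq:refl}) for the $b$-function together with Theorem \ref{thm:ratio} and the AR-translation compatibility just established. First I would recall that for a Dynkin quiver every codimension $1$ orbit closure in $\Rep(Q,\alpha)$ is the zero set $Z(c^{S})$ of a single fundamental semi-invariant, and by \cite{openorb} this happens exactly when $\alpha$ is $S$-stable for one of the simples $S=S_i$ in $T^\perp$. Since $Q$ is Dynkin, such a zero set is automatically a reduced hypersurface: it is irreducible (being an orbit closure), it is a complete intersection of codimension $1$, and reducedness follows from Theorem \ref{thm:dynk} applied to the single semi-invariant $c^S$ (or directly from the $k=0,1$ base cases of that induction). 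Thus Theorem \ref{thm:ratio} applies with $r=1$, and $Z(c^S)$ has rational singularities iff $-1$ is the largest root of its one-variable $b$-function $b_\alpha(s)$ with multiplicity one.

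The next step is to run the reduction via reflection functors and the AR-translation. By the Proposition immediately preceding the theorem, if $S$ is not projective and both $\alpha$ is $S$-stable and $c(\alpha)$ is $\tau S$-stable, then rational singularities for $Z(c^S)$ is equivalent to rational singularities for $Z(c^{\tau S})$, which lives in a representation space of dimension vector $c(\alpha)$ (still in $\nat^n$ by hypothesis, which is automatic here since we stay among orbit closures). Iterating the Coxeter transformation strictly decreases the total dimension (this is the standard Dynkin fact that $c$ has no nonnegative fixed points, tied to $\langle\ ,\ \rangle$ being positive definite), so after finitely many steps the stability hypothesis must fail, and by the structure of the $T_j^{++}$ sequences (\ref{eq:defin}) this forces $S$ to become projective. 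As noted after the Proposition, once $S$ is projective one reduces via reflection functors to the case $S$ simple, and there the $b$-function is computed explicitly to have largest root $-1$ with multiplicity $1$: indeed for $S$ simple the semi-invariant $c^S$ is (up to the reflection change of coordinates) a single matrix entry or a linear form, whose $b$-function is $s+1$.

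Then I would assemble the pieces: at each reflection step the transformation (\ref{eq:refl}) multiplies the $b$-function by a ratio of products $[s]^{\beta_x}_{\alpha_x}/[s]^{\beta_x}_{c(\alpha)_x}$, and the key numerical point — already extracted in the proof of the preceding Proposition — is that $S$-stability of $\alpha$ gives $\alpha \geq \beta$ (componentwise), so each factor $[s]^{\beta_x}_{\alpha_x}$ has all roots $\leq -1$, hence contributes no root larger than $-1$ and does not create a root at $-1$ of higher multiplicity beyond what is forced. Tracking this through the finite chain from $\alpha$ down to the projective/simple base case shows the largest root of $b_\alpha(s)$ remains $-1$ with multiplicity $1$ throughout, and Theorem \ref{thm:ratio} then yields rational singularities for $Z(c^S)$.

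The main obstacle I expect is the bookkeeping in the last step: one must verify that the reflection formula (\ref{eq:refl}) never introduces a root strictly greater than $-1$ and never pushes the multiplicity of $-1$ above $1$, which requires knowing not just $\alpha\geq\beta$ but also controlling the denominator factors $[s]^{\beta_x}_{c(\alpha)_x}$ — i.e. ensuring the $\tau S$-stability of $c(\alpha)$ holds at every intermediate stage, so that the dual inequality $c(\alpha)\geq\beta$ is available and the cancellation in (\ref{eq:refl}) is as clean as possible. Making precise that the chain of reflections can be chosen to keep us among $S'$-stable dimension vectors at each stage (so that the hypotheses of the preceding Proposition are met repeatedly), rather than merely assuming $c(\alpha)\in\nat^n$, is the technical heart of the argument; everything else is the formal combination of Theorem \ref{thm:ratio}, Theorem \ref{thm:dynk}, and the explicit base case.
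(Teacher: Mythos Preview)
Your approach is essentially the paper's, but you are manufacturing difficulty where none exists. The paper's proof is a single sentence: \emph{by \cite{openorb}, for Dynkin quivers all dimension vectors are stable}. That fact dissolves precisely the ``technical heart'' you flag at the end --- there is no need to \emph{choose} a chain of reflections that stays in the stable locus, because for Dynkin $Q$ \emph{every} dimension vector is $S$-stable with respect to each simple $S$ in $T^\perp$. Hence the hypotheses of the preceding Proposition are automatically satisfied at every step, and one iterates until $S$ becomes projective, then finishes with the reflection-functor reduction to the simple case already noted after that Proposition.

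Two small corrections to your write-up. First, the termination is not that ``stability must fail'' forcing $S$ projective; stability never fails here. The iteration stops simply because for a Dynkin quiver the AR-translate $\tau^k S$ is eventually projective (finite AR-quiver). Second, your appeal to Theorem~\ref{thm:dynk} for reducedness is unnecessary in codimension~$1$: a codimension~$1$ orbit closure is the zero set of a single irreducible semi-invariant, hence automatically a reduced hypersurface, so Theorem~\ref{thm:ratio} applies directly. The extra paragraph tracking roots of $[s]^{\beta_x}_{\alpha_x}/[s]^{\beta_x}_{c(\alpha)_x}$ is redundant once you invoke the Proposition, whose conclusion is already the equivalence of rational singularities before and after the step; you do not need to re-examine the $b$-function factor by factor.
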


\begin{proof}
By \cite{openorb}, for Dynkin quivers all dimension vectors are stable.
\end{proof}

\begin{remark}\label{rem:sing}
It is known that all orbit closures of Dynkin quiver of type $A$ and $D$ have rational singularities \cite{orb1,orb2,ryan}. For type $A$ quivers, the roots of the $b$-functions corresponding to codimension $1$ orbit closures are all integers (see the computations in \cite{sugi,lol}), while for type $D$ they can be half-integers \cite[Theorem 4.10]{lol}. We know that the global $b$-function is the same as the local $b$-function at $0$. (see \cite{lol}). By \cite[Theorem 1.4]{zwarahyper} orbit closures in these cases are hypersurfaces if and only if they are local hypersurfaces at $0$, and this is a property preserved by smooth morphisms. We conclude that the types of singularities of orbit closures of type $A$ and those of type $D$ are not equivalent.
\end{remark}

\begin{theorem}
\label{thm:extend1}
Let $Q$ be an extended Dynkin quiver, $T_1,\dots, T_m$ be pairwise non-isomorphic indecomposables in $\Rep(Q)$ such that $\Ext(T_i,T_j)=0$, for any $i,j\leq m$. Then there is a positive integer $N$ such that all codimension $1$ orbit closures in $\Rep(Q,\alpha)$ have rational singularities, for any dimension vector $\alpha = \lambda_1\cdot \underline{d}(T_1) + \dots + \lambda_m \cdot\underline{d}(T_m)$ with $\lambda_i\geq N$ for $i=1,\dots, m$.
\end{theorem}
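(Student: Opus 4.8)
The plan is to reduce, for $N$ large, every codimension $1$ orbit closure to the zero set $Z(c_S)$ of a single fundamental semi-invariant and then apply Theorem~\ref{thm:ratio} in the hypersurface case: such a $Z(c_S)$ has rational singularities exactly when $-1$ is the largest root of the Bernstein--Sato polynomial of $c_S$ and this root is simple.

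First I would choose $N$ so large that, for every $\alpha=\lambda_1\underline{d}(T_1)+\dots+\lambda_m\underline{d}(T_m)$ with all $\lambda_i\ge N$, the vector $\alpha$ is $S_i$-stable (in the sense recalled before Theorem~\ref{thm:dynkin1}) for all $i=m+1,\dots,n$; this only requires each $\lambda_i$ to dominate the multiplicities occurring in the finitely many modules $T_j^{++}$, so it holds for $N\gg 0$. By \cite{openorb} the codimension $1$ orbit closures in $\Rep(Q,\alpha)$ are then precisely the hypersurfaces $Z(c_{S_i})$, and --- after a further enlargement of $N$, using Theorem~\ref{thm:reduc} (or the irreducibility of these semi-invariants for large $\alpha$) --- each $Z(c_{S_i})$ is a reduced hypersurface. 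So it remains, for a fixed $i$ and $S:=S_i$, to prove that the Bernstein--Sato polynomial of $c_S$ has $-1$ as its top root, with multiplicity $1$.

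For this I would iterate the Proposition preceding Theorem~\ref{thm:dynkin1}: by formula (\ref{eq:refl}), passing from $(\alpha,S)$ to $(c(\alpha),\tau S)$ --- or, running the same argument on the opposite quiver, to $(c^{-1}(\alpha),\tau^{-1}S)$ --- multiplies the relevant $b$-function by the ratio of the bracket factors appearing in (\ref{eq:refl}), and these contribute no root $\ge -1$ precisely because $\alpha\ge\underline{d}(S)$ and $c(\alpha)\ge\underline{d}(S)$, the two inequalities that the proof of that Proposition deduces from $S$-stability. When $S$ is preprojective (resp.\ preinjective) only finitely many applications of $\tau$ (resp.\ $\tau^{-1}$) bring it to a projective (resp.\ injective) module, after which, as noted there, reflection functors reduce to the case of a simple module, for which $-1$ is the top root of the Bernstein--Sato polynomial with multiplicity $1$. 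The number of steps is bounded independently of $N$, so for $N$ large all intermediate dimension vectors $c^j(\alpha)$ lie in $\nat^n$ and dominate $\underline{d}(\tau^j S)$, keeping the hypotheses of the Proposition in force; this is exactly the argument of Theorem~\ref{thm:dynkin1} with the dependence on $N$ made explicit.

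The genuinely new case --- and the one I expect to be the main obstacle --- is when $S$ is a regular simple of a tube of rank $p>1$, so that $\tau^p S\cong S$ and the Coxeter iteration above is periodic and never reaches a projective or injective module. Here I would instead run the iteration in the decreasing direction until $\alpha$ reaches a minimal $S$-stable dimension vector in the relevant orbit, and then use the computations of \cite{lol} inside that tube: an exceptional tube of rank $p$ is equivalent to the category of nilpotent representations of a cyclic quiver, where the relevant $b$-function is essentially the classical $b$-function of a determinant and has $-1$ as its top root with multiplicity $1$ once $\alpha$ is large enough. The real work lies in controlling the product of the correction factors of (\ref{eq:refl}) over a full period $p$ and in verifying that no regular real Schur root can force the top root strictly above $-1$; once that is settled, it suffices to take $N$ large enough to cover the finitely many tubes of $Q$ at once.
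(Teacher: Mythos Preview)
Your overall strategy coincides with the paper's: reduce codimension~$1$ orbit closures to hypersurfaces $Z(c_S)$ via $S$-stability for large $N$, then control the $b$-function by iterating formula~(\ref{eq:refl}). The paper's proof is two lines: it invokes \cite[Proposition~5.7]{lol} to assert that for an extended Dynkin quiver the iteration of (\ref{eq:refl}) terminates in finitely many steps, and observes that for $N$ large all intermediate dimension vectors remain stable. So your preprojective/preinjective analysis is exactly the right unpacking of that citation in those cases.

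The gap is precisely where you flag it. When $S$ is regular with $\tau^p S\cong S$, the iteration on the pair $(\alpha,S)$ is \emph{not} periodic: $S$ returns after $p$ steps, but $c$ has infinite order on the root lattice of an extended Dynkin quiver (it is unipotent, not of finite order), so $c^p(\alpha)\neq\alpha$ in general. Hence ``running the iteration in the decreasing direction until $\alpha$ reaches a minimal $S$-stable dimension vector'' is not a terminating procedure as stated; what one actually needs is that after one full period the product of the correction factors in (\ref{eq:refl}) over $j=0,\dots,p-1$ has a closed form (or at least a shape) from which one reads off that no root $>-1$ is introduced. This is the content the paper outsources to \cite[Proposition~5.7]{lol}, and it is not recoverable from the arguments you sketch. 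Your suggestion to pass to the nilpotent cyclic-quiver model of the tube is also not directly usable: the $b$-function is an invariant of the polynomial $c_S$ on the ambient space $\Rep(Q,\alpha)$, not of the abelian subcategory, so a categorical equivalence of the tube with cyclic-quiver representations does not by itself identify the two $b$-functions. In short, the missing idea is exactly the periodic recursion for the $b$-function along a tube established in \cite{lol}; once that is in hand, your argument and the paper's coincide.
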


\begin{proof}
By \cite[Proposition 5.7]{lol} we can compute the $b$-function using (\ref{eq:refl}) in a finite number of steps, and for large enough $N$ the dimension vectors are stable.
\end{proof}

Based on this, we make the following

\begin{conjecture}
Theorem \ref{thm:extend1} is true for any quiver $Q$.
\end{conjecture}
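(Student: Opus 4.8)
The plan is to carry out the Bernstein--Sato strategy of this section for an arbitrary quiver and isolate exactly where tameness enters. First I would reduce the conjecture to a claim about a single Schofield semi-invariant. By \cite{openorb}, once $N$ is large enough every codimension $1$ orbit closure in $\Rep(Q,\alpha)$ is one of the hypersurfaces $Z(c^{S})$, with $S=S_i$ a simple object of $T^\perp$ and $\alpha$ an $S$-stable dimension vector; this step uses no tameness. Each such $Z(c^{S})$ is reduced and irreducible, because $c^{S}$ is one of the free polynomial generators of $\SI(Q,\alpha)$ by Theorem \ref{thm:schosemi}, hence prime in $\SI(Q,\alpha)$, hence prime in $k[\Rep(Q,\alpha)]$ as the connected group $\GL(\alpha)$ permutes, and so fixes, the irreducible factors of a semi-invariant. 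So Theorem \ref{thm:ratio} in the case $r=1$ (equivalently Saito's criterion, \cite{saito}) reduces the conjecture to the following: for every quiver $Q$, every family $T_1,\dots,T_m$ of pairwise non-isomorphic indecomposables with $\Ext(T_i,T_j)=0$, and every index $i$, there is an $N$ such that whenever $\lambda_j\ge N$ for all $j$ and $\alpha$ is $S_i$-stable, the Bernstein--Sato polynomial $b(s)$ of $c^{S_i}$ has $-1$ as its largest root, with multiplicity one.

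I would attack this $b$-function statement by iterating the reflection formula (\ref{eq:refl}), exactly as in the proof of the unlabelled proposition preceding Theorem \ref{thm:dynkin1}. A full sink/source reflection sequence sends the pair $(\alpha,S)$ to $(c(\alpha),\tau S)$ and multiplies $b(s)$ by the correction factor $\prod_{x\in Q_0}[s]^{\beta_x}_{\alpha_x}/[s]^{\beta_x}_{c(\alpha)_x}$ of (\ref{eq:refl}); the mechanism of that proof shows this factor affects neither the largest root nor its multiplicity as long as, at every stage of the iteration, the current dimension vector and its Coxeter image both dominate the dimension vector of the current simple -- which is guaranteed by the stability of all the intermediate dimension vectors $c^{k}(\alpha)$ with respect to $\tau^{k}S$. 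For $N$ large these stability conditions hold for all $k$ -- provided the iteration has only finitely many stages -- and when the iteration reaches a projective, a further finite reflection sequence reduces, as in \cite[Proposition 5.4 b)]{lol}, to the case of a simple representation, for which $b(s)$ is explicitly a product of linear factors with $-1$ as a simple largest root. The bound on $N$ is then routine: each $\tau^{k}S$ is a fixed real Schur root, and $S$-stability is an open condition on the multiplicities that holds for all sufficiently large multiplicities.

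The essential obstacle is thus the termination of the $\tau$-orbit reduction. For Dynkin quivers it terminates because the Coxeter transformation has finite order; for extended Dynkin quivers it is \cite[Proposition 5.7]{lol}, resting on the fact that the relevant real Schur roots are preprojective or preinjective. For a wild quiver the simple objects $S_i\in T^\perp$ need not lie in a preprojective or preinjective component of $\Rep(Q)$: a real Schur root may be realised by a regular brick, and then $\tau^{k}S_i$ stays regular for all $k$ and never becomes projective or injective, so (\ref{eq:refl}) never terminates in either direction. To deal with this regular case I would pursue two routes. The first keeps $b$-functions but changes the semi-invariant: since $Z(c^{S})$ is determined by the hypersurface alone, one may try to replace $S$ by another real Schur root cutting out the same codimension $1$ orbit closure but lying in a preprojective component -- a mutation-type maneuver -- thereby forcing termination. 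The second, which I expect to be the more robust, bypasses $b$-functions for wild regular $S$ and proves rational singularities geometrically in the spirit of \cite{orb1,orb2}: writing the codimension $1$ orbit closure as $\overline{\mathcal O_X}$ with $X\cong C\oplus Y$ and minimal degeneration $X'\cong A\oplus B\oplus Y$ via an exact sequence $0\to A\to C\to B\to 0$, one presents $\overline{\mathcal O_X}$ as the image of a $\Hom$-space bundle over $\overline{\mathcal O_{X'}}$ with affine-space fibres, checks that this is a resolution meeting the discrepancy bound of Theorem \ref{thm:ratio}, and inducts on $\dim\alpha$. The crux of that route is controlling the singularities of the smaller base $\overline{\mathcal O_{X'}}$, which in general is no longer a codimension $1$ orbit closure, together with the case where the boundary divisor of $\overline{\mathcal O_X}$ is a union of several orbit closures rather than a single one.
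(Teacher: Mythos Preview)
The statement you are trying to prove is labelled a \emph{Conjecture} in the paper; the paper offers no proof and leaves it open. Your write-up is therefore not to be compared against a proof in the paper --- there is none --- but assessed on its own merits as an attempted proof of an open problem.

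Your reduction to a single fundamental semi-invariant and your reconstruction of the tame argument via iterated application of (\ref{eq:refl}) are correct, and you have put your finger on precisely the obstruction that makes the statement a conjecture rather than a theorem: for a wild quiver the simple $S_i\in T^\perp$ may be a regular brick, so the $\tau$-orbit never reaches a projective and the recursion (\ref{eq:refl}) does not terminate. (There is a second, smaller technical issue you pass over: the standing hypothesis $c(\alpha)\in\nat^{Q_0}$ need not propagate along the iteration in the wild case, since some $T_i$ may be projective and then $c(\underline{d}(T_i))$ has negative entries; one must also argue that for large $N$ all the intermediate $c^k(\alpha)$ remain non-negative.)

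Neither of your two proposed routes closes the gap. Route~1 cannot work as stated: the hypersurface $Z(c^{S})$ is irreducible, so the semi-invariant cutting it out is determined up to scalar, hence so is its weight $\langle\,\cdot\,,\underline{d}(S)\rangle$; since the Euler form of a quiver without oriented cycles is non-degenerate, this pins down $\underline{d}(S)$ uniquely. There is no other real Schur root defining the same hypersurface. If instead you mean to pass to a different quiver by tilting or mutation so that the image of $S$ becomes preprojective, you then owe an argument that the corresponding orbit closures are smoothly equivalent, which is itself a substantial open question in the wild case. Route~2 is a legitimate research programme --- it is essentially the Bobi\'nski--Zwara strategy --- but your sketch is not a proof: the induction does not close because the base $\overline{\mathcal O_{X'}}$ is a higher-codimension orbit closure in a wild quiver, for which rational singularities are not known, and you give no mechanism to control it. In short, what you have written is an accurate diagnosis of why the conjecture is open, together with two plausible but unexecuted plans; it is not a proof.
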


Now we illustrate how Proposition \ref{prop:link} can be used to determine the property of having rational singularities for nullcones. Assume the $b$-function of several variables $b(\underline{s})$ (we suppress the index $\underline{m}$) is of the form 
\begin{equation}
\mathlarger{b(\underline{s}) = \prod_{i=1}^r  [s]^{\underline{e}^i}_{d_i} \prod_{j=1}^N  [s]^{\gamma^j}_{a_j,b_j}},
\label{eq:ass}
\end{equation}
where $d_i\in \nat$ and for all $j=1,\dots,N$ we have $a_j,b_j \in \nat, \gamma^j \in \nat^r$ with $\underline{e}\cdot \gamma^j\leq a_j$.

\begin{remark}
For example, if $Q$ is Dynkin and the multiplicities $\lambda_i\geq N(Q)+1$ for $i=1,\dots,m$, then the $b$-function of several variables always looks as above (\ref{eq:ass}). The only thing left to see is that $\underline{e}\cdot \gamma^j\leq a_j$, for every $j=1,\dots, N$. This follows from the formula (\ref{eq:refl}), since the multiplicity condition implies $\mathcal{Z'}(Q,\alpha))\neq \emptyset$ by \cite{mater}, while this in turn implies that $\dim \alpha \geq \beta_1+\dots+\beta_r$ by \cite[Lemma 3.6]{zwara1}. 

Generalizing the case of $1$ semi-invariant, we conjecture that the condition $\underline{e}\cdot \gamma^j\leq a_j$ on $b$-functions of several variables of semi-invariants of more general prehomogeneous vector spaces implies rational singularities of their zero sets.
\end{remark}

\begin{definition}
We say an element $z\in Z(\tilde{B})$ is \textit{good}, if either $z=-\underline{e}$ or $\underline{e}\cdot z < -r$.
\end{definition}

\begin{proposition} Suppose $Z=Z(f_1,\dots,f_r)$ is a reduced complete intersection and all elements in $Z(\tilde{B})$ are good. Then $Z$ has rational singularities.
\end{proposition}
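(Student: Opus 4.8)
The plan is to use Theorem~\ref{thm:ratio}: since $Z=Z(f_1,\dots,f_r)$ is given to be a reduced complete intersection of codimension $r$, it suffices to show that $-r$ is the largest root of the Bernstein--Sato polynomial $b(s)$ and that it occurs with multiplicity $1$. The strategy is to transfer this numerical information from $b(s)$ to the ideal $\tilde B$ and then to the set $Z(\tilde B)$, using Proposition~\ref{prop:link} together with the assumption that every element of $Z(\tilde B)$ is good.

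First I would invoke Proposition~\ref{prop:link}: there is a polynomial $\tilde b(s)\in\tilde B$ depending only on $s=s_1+\dots+s_r$, of lowest degree, and $b(s)\mid\tilde b(s)$. Thus every root of $b(s)$ is a root of $\tilde b(s)$, and conversely the divisibility lets us control the largest root of $b(s)$ once we control that of $\tilde b(s)$. Now $\tilde b(s)$, viewed as a polynomial in $s_1,\dots,s_r$ that happens to be a function of $s=\underline e\cdot\underline s$, vanishes precisely on the union of hyperplanes $\{\underline e\cdot\underline s = \rho\}$ where $\rho$ runs over the (negated) roots of $\tilde b$; since $\tilde b\in\tilde B$, each such hyperplane must meet $Z(\tilde B)$ — indeed the whole hyperplane is forced to lie in $Z(\tilde B)$ only after we observe, via the factored form (\ref{eq:form}) of the $b_{\underline c}$ and the structure of $\tilde B$, that $Z(\tilde B)$ is a finite union of rational affine subspaces on which the linear form $\underline e\cdot\underline s$ is constant. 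Concretely: every generator $b_{\underline c}$ of $\tilde B$ is a product of linear forms (by (\ref{eq:form}) applied to $b_{\underline c^+}$, times the binomial factors), so $Z(\tilde B)$ is a finite union of linear subspaces, and on each of these the root $\rho$ of $\tilde b$ corresponding to a hyperplane containing it gives a point $z\in Z(\tilde B)$ with $\underline e\cdot z=-\rho$. Hence each root $-\rho$ of $\tilde b(s)$ is realized as $\underline e\cdot z$ for some $z\in Z(\tilde B)$.

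Next, apply the hypothesis that every $z\in Z(\tilde B)$ is good, i.e. $z=-\underline e$ or $\underline e\cdot z<-r$. If $z=-\underline e$ then $\underline e\cdot z=-r$; otherwise $\underline e\cdot z<-r$. Therefore every root of $\tilde b(s)$ is $\leq -r$, with equality occurring exactly at the value $-r$ coming from $z=-\underline e$. Since $b(s)\mid\tilde b(s)$ and the roots of $b(s)$ are negative, the largest root of $b(s)$ is at most $-r$. On the other hand, $-r$ is always a root of $b(s)$ for a complete intersection of codimension $r$ (this follows from $b(s)\prod f^{s_i}$ lying in the module in (\ref{eq:bern}) specialized appropriately; alternatively one uses that $-r$ is forced as the ``trivial'' root, cf.\ \cite{bms}), and it must occur in $b(s)$ with the same or smaller multiplicity than in $\tilde b(s)$, where by the previous step the value $-r$ comes only from the single point $z=-\underline e$, hence with multiplicity $1$. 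Thus $-r$ is the largest root of $b(s)$ and has multiplicity $1$, and Theorem~\ref{thm:ratio} gives that $Z$ has rational singularities.

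The main obstacle I anticipate is the middle step: justifying that the roots of $\tilde b(s)$ are in bijection (with multiplicity) with the values $\underline e\cdot z$, $z\in Z(\tilde B)$, lying on the relevant hyperplanes. This requires a careful analysis of which linear factors of the generators $b_{\underline c}$ survive after imposing only the constraint that the combination depends solely on $s$, essentially re-running the Farkas'-lemma argument from the proof of Proposition~\ref{prop:link} to see that the ``surviving'' hyperplanes are exactly those of the form $\underline e\cdot\underline s=\rho$ that are forced into $Z(\tilde B)$. One must also be careful about multiplicities: a priori a hyperplane $\underline e\cdot\underline s=-r$ could appear with high multiplicity in some $b_{\underline c}$ while still corresponding to the single good point $-\underline e$, so the multiplicity-$1$ claim for $b(s)$ needs the divisibility $b(s)\mid\tilde b(s)$ to be used in the correct direction — controlling $b$ from below by the complete-intersection structure and from above by $\tilde b$.
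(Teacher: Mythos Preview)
Your overall strategy is the same as the paper's: combine Proposition~\ref{prop:link} with Theorem~\ref{thm:ratio} by controlling the largest root and its multiplicity in $\tilde b(s)$, hence in $b(s)$. The correspondence between roots of $\tilde b(s)$ and the values $\underline e\cdot z$ for $z\in Z(\tilde B)$ is also correct, though your justification is tangled: it follows cleanly from the fact that $\tilde b(s)$ generates the elimination ideal $\tilde B\cap k[s]$, so the root set of $\tilde b$ is exactly the (finite, hence closed) image of $Z(\tilde B)$ under $z\mapsto \underline e\cdot z$. Your line ``since $\tilde b\in\tilde B$, each such hyperplane must meet $Z(\tilde B)$'' is the wrong direction of inclusion and should be replaced by the minimality argument.

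The genuine gap is the multiplicity-$1$ claim, which you correctly flag as the obstacle but do not resolve. Knowing that $-\underline e$ is the unique point of $Z(\tilde B)$ on the hyperplane $\{\underline e\cdot\underline s=-r\}$ is a set-theoretic statement and does not by itself bound the multiplicity of $(s+r)$ in $\tilde b(s)$. What you are missing is the standing hypothesis~(\ref{eq:ass}) stated just before the proposition, in particular the inequalities $\underline e\cdot\gamma^j\le a_j$. The paper uses these as follows: for each $i$ one has $b_{\underline e^i}\in\tilde B$, and under~(\ref{eq:ass}) with $\underline m=\underline e^i$ the factor $[s]^{\underline e^i}_{d_i}$ contributes $(s_i+1)(s_i+2)\cdots$, while every factor $[s]^{\gamma^j}_{a_j,b_j}$ contributes only forms $\gamma^j\cdot\underline s+c$ with $c\ge a_j+1>\underline e\cdot\gamma^j$, none of which vanish at $-\underline e$. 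Thus, after localizing $\tilde B$ at $-\underline e$, each $b_{\underline e^i}$ becomes a unit times $(s_i+1)$, so the localized ideal is the full maximal ideal. This is precisely what forces $-r$ to be a simple root of $\tilde b(s)$; then goodness of every $z\in Z(\tilde B)$ pushes all remaining roots strictly below $-r$, and Theorem~\ref{thm:ratio} applies. Without invoking $\underline e\cdot\gamma^j\le a_j$ there is no mechanism in your argument to exclude $(s+r)^2\mid\tilde b(s)$.
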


\begin{proof}
Localizing $\tilde{B}$ at $z=-\underline{e}$ and looking at the elements $b_{\underline{e}^{i}}$ for $i=1,\dots,r$, we see that the conditions $\underline{e}\cdot \gamma^j\leq a_j$ imply that $-r$ is a root of $\tilde{b}(s)$ with multiplicity $1$. Hence the largest root of the polynomial $\tilde{b}(s)/(s+r)$ is smaller than $-r$, so we conclude by combining Theorem \ref{thm:ratio} and Proposition \ref{prop:link}.
\end{proof}

Put $L:=\{\gamma^1,\dots, \gamma^N\}\cup \{\underline{e}^1,\dots,\underline{e}^r\}$. Set $\Gamma=L\backslash\{\underline{e}^{1},\dots,\underline{e}^{r}\}$, and for each $i\in\{1,\dots,r\}$, let $\Gamma_i:=\{j\in \{1,\dots,N\}: \gamma^j_i>0, \gamma^j\in\Gamma\}$.

\begin{lemma} \label{lem:reduc}
Using the notation above, we have the following reduction techniques:
\begin{itemize}
\item[(a)] Assume there exists $I=\{i_1,\dots, i_k\}\subset \{1,\dots,r\}$ with the property that for any $(j_1,\dots,j_k)\in \Gamma_{i_1}\times\dots \times \Gamma_{i_k}$ there exists $(u_1,\dots,u_k)\in \rat_+^k$ such that $\displaystyle\sum_{i\in \{j_1,\dots,j_k\}} u_i  \gamma^{j_i}=\underline{e}$ and $\displaystyle\sum_{i\in \{j_1,\dots,j_k\}} u_i (a_i+1)>r$. Assume further that for any $i\in I$, all $z\in Z(\tilde{B})$ with $z_i\leq -1$ are good. Then all the elements in $Z(\tilde{B})$ are good.
\item[(b)] Assume there exists $J=\{j_1,\dots, j_k\}\subset \{1,\dots, r\}$ such that $\underline{e}\notin E_J:=\op{span}_{\rat_+}\Gamma+\op{span}_{\rat}\{\underline{e}^{j_1},\dots,\underline{e}^{j_k}\}$, and suppose $J$ is maximal with this property. Write $J^c=J_+ \cup J_-$ with $J_+ = \{i \in J^c: \underline{e}\in E_J + \rat_+\cdot\underline{e}^i\}$ and $J_-=\{i \in J^c:\underline{e}\in E_J + \rat_-\cdot\underline{e}^i\}$.
Then for any $z\in Z(\tilde{B})$, there is an element $i\in J^c$ such that if $i\in J_+$ then $z_i\in \integ_{<0}$, and if $i\in J_-$ then $z_i\in \nat$.
\end{itemize}
\end{lemma}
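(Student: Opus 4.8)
The statement bundles two independent reduction lemmas, so I would prove (a) and (b) separately. For part (a), the strategy is to localize the ideal $\tilde B$ along the linear subspace cut out by the forms $\underline{e}^i\cdot\underline{s}+1$, $i\in I$ (equivalently, look at points $z\in Z(\tilde B)$ with $z_i=-1$ for all $i\in I$, and more generally $z_i\le -1$), and show that the hypothesis forces such $z$ to have $\underline{e}\cdot z<-r$, hence be good; combined with the assumed goodness of the $z_i\le -1$ points for each single $i\in I$ this will cover all of $Z(\tilde B)$ after observing that every $z\in Z(\tilde B)$ must satisfy $z_i\le -1$ for \emph{some} coordinate. The key computation: given a point $z\in Z(\tilde B)$, I examine the generators $b_{\underline{c}}$ for suitable $\underline{c}$ with $\underline{e}\cdot\underline{c}=1$. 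Using the explicit form (\ref{eq:ass}) of $b(\underline{s})$, the factors of $b_{\underline{c}}$ are of the shape $\gamma^j\cdot\underline{s}+(\text{constant})$ with constants lying in the ranges dictated by $[s]^{\gamma^j}_{a_j,b_j}$ shifted by $\underline{c}^-$. The hypothesis $\sum u_i\gamma^{j_i}=\underline{e}$ with $\sum u_i(a_i+1)>r$ says precisely that a positive-rational combination of the linear forms $\gamma^{j_i}\cdot\underline{s}+(a_{j_i}+1)$ evaluates at $z$ to $\underline{e}\cdot z + \sum u_i(a_{j_i}+1)$; since $z$ annihilates one of the chosen generators (the one built from the $\gamma^{j_i}$ for which $z_i\le -1$), chasing which factor vanishes yields $\underline{e}\cdot z \le -\sum u_i(a_{j_i}+1) < -r$. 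I would make this precise by a careful bookkeeping of which $b_{\underline{c}}$ to test, mirroring the Farkas-type argument in the proof of Proposition \ref{prop:link}.

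For part (b), the plan is again a Farkas/separating-hyperplane argument on the cone $E_J$. Maximality of $J$ means that for every $i\in J^c$, adding $\underline{e}^i$ to the span in one of the two sign directions puts $\underline{e}$ in the enlarged cone; this dichotomy is exactly the partition $J^c=J_+\cup J_-$. The claim is that a point $z\in Z(\tilde B)$ cannot avoid, for every $i\in J^c$, the "forced" sign condition ($z_i\in\integ_{<0}$ on $J_+$, $z_i\in\nat$ on $J_-$). Suppose it did: then $z$ is a common zero of $\tilde B$ with $z_i\ge 0$ (in fact $z_i\notin\integ_{<0}$) for all $i\in J_+$ and $z_i<0$ for all $i\in J_-$ (in fact $z_i\notin\nat$). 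I would then exhibit a generator $b_{\underline{c}}$ not vanishing at $z$: choose $\underline{c}\in\integ^r$ with $\underline{e}\cdot\underline{c}=1$ whose sign pattern on $J^c$ is tuned to the sign pattern of $z$ (negative $c_i$ on $J_+$ so that the binomial factors $\binom{s_i}{-c_i}$ do not vanish at non-negative $z_i$, positive $c_i$ on $J_-$ so the analogous shift avoids the nonnegative-integer roots coming from $z_i\notin\nat$) and with $\gamma^j\cdot\underline{c}$ made as negative as possible for all $\gamma^j\in\Gamma$, which is possible precisely because $\underline{e}\notin E_J=\op{span}_{\rat_+}\Gamma+\op{span}_{\rat}\{\underline{e}^{j_1},\dots,\underline{e}^{j_k}\}$ (apply Farkas to get a separating $\underline{c}$, then scale/adjust as in Proposition \ref{prop:link}). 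Then no factor of $b_{\underline{c}}$ — neither the $[s]$-type factors of $b_{\underline{c}^+}(\underline{s}+\underline{c}^-)$ (large constant terms push their roots away from $z$) nor the binomial factors (sign pattern chosen to miss $z$) — vanishes at $z$, so $b_{\underline{c}}(z)\neq0$, contradicting $z\in Z(\tilde B)$.

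The main obstacle in both parts is the precise control over which factors of $b_{\underline{c}}$ can vanish at a given point $z$ — that is, translating the combinatorial hypotheses on the $\gamma^j$ and $a_j$ into statements about roots of the one-variable-restricted polynomials $b_{\underline{c}}$, while correctly tracking the shift by $\underline{c}^-$ and the binomial factors $\binom{s_i}{-c_i}$. In part (a) the delicate point is that the inequality $\sum u_i(a_i+1)>r$ is strict and must survive the passage through whichever generator $z$ happens to kill; in part (b) the delicate point is simultaneously arranging the correct sign pattern of $\underline{c}$ on $J^c$ \emph{and} the separation property $\max_j\gamma^j\cdot\underline{c}$ small on $\Gamma$, which requires checking these constraints are jointly satisfiable (they are, by the maximality of $J$ together with the freedom to scale $\underline{c}$). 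Once the bookkeeping of (\ref{eq:bern})--(\ref{eq:form}) and (\ref{eq:ass}) is set up carefully, both arguments are structurally the Farkas-lemma manipulation already used for Proposition \ref{prop:link}.
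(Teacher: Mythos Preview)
Your overall approach --- examine the generators $b_{\underline{c}}$ for well-chosen $\underline{c}$ and invoke a Farkas-type separation as in Proposition~\ref{prop:link} --- matches the paper's. However, both parts contain a concrete error in the case analysis.

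In part (a), the logic is inverted. The hypothesis already handles every $z$ with $z_i\le -1$ for \emph{some} $i\in I$; the actual work is the complementary case, $z_i>-1$ for \emph{all} $i\in I$. Your claim that ``every $z\in Z(\tilde B)$ must satisfy $z_i\le -1$ for some coordinate'' is neither what is proved nor true in general. The paper's argument for the remaining case is precisely the computation you describe, but applied correctly: for each $j$ the generator $b_{\underline{e}^{i_j}}$ vanishes at $z$; since its $\underline{e}^{i_j}$-type factors have roots only at integers $\le -1$ and $z_{i_j}>-1$, the vanishing must come from a factor with direction in $\Gamma_{i_j}$, giving $\gamma^{j'}\cdot z\le -(a_{j'}+1)$ for some $j'\in\Gamma_{i_j}$. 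Taking the positive combination with the $u_i$ then yields $\underline{e}\cdot z<-r$. You have the right inequality, attached to the wrong case.

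In part (b), the signs of $c_i$ on $J^c$ are not yours to tune: they are \emph{forced} by the Farkas separation. Once $\underline{c}$ satisfies $\underline{e}\cdot\underline{c}=1$, $c_j=0$ for $j\in J$, and $\gamma\cdot\underline{c}\le 0$ for all $\gamma\in\Gamma$, writing $\underline{e}=v\pm t\,\underline{e}^i$ with $v\in E_J$, $t>0$, and dotting with $\underline{c}$ forces $c_i>0$ for $i\in J_+$ and $c_i<0$ for $i\in J_-$. You have these reversed, and with your assignment the binomial $\binom{s_i}{-c_i}$ for $i\in J_+$ would have its roots in $\nat$, exactly where your contradiction hypothesis allows $z_i$ to live, so the argument breaks. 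With the correct (forced) signs it goes through: for $i\in J_+$ the $\underline{e}^i$-type factors of $b_{\underline{c}^+}(\underline{s}+\underline{c}^-)$ have roots in $\integ_{<0}$, for $i\in J_-$ the binomial factor has roots in $\nat$, and the $\Gamma$-factors are pushed away by making $\gamma\cdot\underline{c}$ small; hence the vanishing of $b_{\underline{c}}$ at $z$ must come from one of these $\underline{e}^i$-type factors, which is exactly the conclusion.
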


\begin{proof}
\begin{itemize}
\item[(a)] Take an element $z\in Z(\tilde{B})$ such that $z_i>-1$, for all $i\in I$. We see that for $j=1,\dots, k$ the linear factors of $b_{\underline{e}^{i_j}}(\underline{s})$ involve the vectors from $\{\underline{e}^{i_j}\}\cup\Gamma_{i_j}$. Hence $z$ is a root of a linear factor of $b_{\underline{e}^{i_j}}(\underline{s})$ involving a vector from $\Gamma_{i_j}$, for each $j=1,\dots,k$. But then the condition $\displaystyle\sum_{i\in \{j_1,\dots,j_k\}} u_i (a_i+1)>r$ implies that $\underline{e}\cdot z < -r$, so $z$ is good.
\item[(b)] Take an element $z\in Z(\tilde{B})$. 
As in the proof of Proposition \ref{prop:link}, there exists $\underline{c}\in \integ^r$ such that $\underline{e}\cdot \underline{c}=1$, $\underline{c}_{j_i}=0$, for $i=1,\dots,k$, and $\max_{\gamma\in \Gamma}\{\gamma \cdot \underline{c}\}$ is arbitrary small. It is immediate that if $i\in J_+$, then $\underline{c}_i>0$, and if $i\in J_-$, then $\underline{c}_i<0$. Since $z$ is a root of $b_{\underline c}$, it follows that there is an element $i\in J^c$ such that $z$ is the root of a form involving a term $\underline{e}^i$, hence the conclusion.
\end{itemize}
\end{proof}

\begin{example}
\label{ex:first}
Take the following $E_6$ quiver and dimension vector:

\[\xymatrix{
 & & n+m\ar[d] & & \\
n \ar[r]& 2n+m\ar[r] & 2n+m & \ar[l] 2n+m& \ar[l] n
}
\]

The generic representation is $T=T_1^{\oplus m}\oplus T_2^{\oplus n}$, where $T_1=\arraycolsep=1.5pt\begin{array}{ccccc}
&&1&& \\
0 & 1 & 1 & 1 & 0
\end{array}, T_2=\begin{array}{ccccc}
&&1&& \\
1 & 2 & 2 & 2 & 1
\end{array}$. The simples in the right perpendicular category $T^\perp$ are the indecomposables with dimension vectors:

\[\arraycolsep=1.5pt
\begin{array}{ccccc}
&&1&& \\
1 & 1 & 1 & 0 & 0
\end{array}\,,\,
\begin{array}{ccccc}
&&1&& \\
0 & 0 & 1 & 1 & 1
\end{array}\,,\,
\begin{array}{ccccc}
&&0&& \\
0 & 1 & 1 & 1 & 1
\end{array}\,,\,
\begin{array}{ccccc}
&&0&& \\
1 & 1 & 1 & 1 & 0
\end{array}.
\]

By Theorem \ref{thm:dynk}, if $n,m\geq 2$ the nullcone is reduced and a complete intersection. Using (\ref{eq:refl}) repeatedly we obtain the $b$-function of several variables:

$$b(\underline{s})= [s]^{1000}_{n+m} \cdot [s]^{0100}_{n+m}\cdot[s]^{0010}_n\cdot[s]^{0001}_n \cdot [s]^{0011}_{n,2n+m} \cdot [s]^{0110}_{n+m,2n+m}\cdot [s]^{1001}_{n+m,2n+m}.$$

We want to show that for $n,m\geq 2$, each element in $z\in Z(\tilde{B})$ is good. We see that Lemma \ref{lem:reduc} (a) applies with $I=\{1,2\}$ and we have $(n+m+1)+(n+m+1)>4$. Hence we can assume one of the 2 possibilities: $z_1\leq -1$ or $z_2\leq -1$. We consider the first case (the latter is analogous), put $z_1=-k_1$, with $k_1\geq 1$, (moreover, we can assume $k_1\leq d_1=n+m$). Then put $z'=(z_2,z_3,z_4)$ and evaluating $s_1=-k_1$ we consider the $b$-function of several variables

$$b_1= [s]^{100}_{n+m}\cdot[s]^{010}_n\cdot[s]^{001}_n \cdot [s]^{011}_{n,2n+m} \cdot [s]^{110}_{n+m,2n+m} \cdot [s]^{001}_{n+m-k_1,2n+m-k_1}.$$

If $\tilde{B}_1$ is the ideal associated to $b_1$, then $z'\in Z(\tilde{B}_1)$. Now we can apply Lemma \ref{lem:reduc} (b) with $J=\emptyset$. Then $J_+=\{1,3\}$ and $J_-=\{2\}$. Hence we have another 3 possibilities. Assume first that $z'_3=-k_2$, with $k_2\geq 1$ (the case of $1\in J_+$ is analogous). This leads to

$$b_2= [s]^{10}_{n+m}\cdot[s]^{01}_n \cdot [s]^{01}_{n-k_2,2n+m-k_2} \cdot [s]^{11}_{n+m,2n+m}.$$

Let $z''=(z'_1,z'_2)$. Again, Lemma \ref{lem:reduc} (a) can be applied with $I=\{1\}$, since we have $n+m+1>2$. Hence we can assume $z''_1=-k_3$, $1\leq k_3\leq n+m$. Then we are left with  $b_3 = [s]^{1}_{n}\cdot [s]^{1}_{n-k_2,2n+m-k_2} \cdot [s]^1_{n+m-k_3,2n+m-k_3}$. Hence for the last choice $k_4\geq \min\{1,n-k_2+1\}$. Hence $\underline{e}\cdot z = -k_1-k_2-k_3-k_4 \leq -4$, with equality only for $z=-\underline{e}$, hence $z$ is good.

Now we return to $b_1$ and we are left with the case $2\in J_-$, so put $z'_2=k_2$, with $k_2\geq 0$. Due to the previous discussion we can assume that $z'_1>-1$ and $z'_3>-1$. Hence we consider

$$b'_2=[s]^{10}_{n+m}\cdot[s]^{01}_n \cdot [s]^{01}_{n+k_2,2n+m+k_2} \cdot [s]^{10}_{n+m+k_2,2n+m+k_2} \cdot [s]^{01}_{n+m-k_1,2n+m-k_1}.$$

Put $z''=(z'_1,z'_3)$. Since $z''_1>-1$, we conclude that $z''$ is not a root of $(b'_2)_{\underline{e}^1}$, finishing this last case.

In conclusion, if $n,m\geq 2$ the nullcone is a reduced complete intersection with rational singularities. Note that if $n,m\geq 3$, then it is the closure of an orbit.
\end{example}

Next, we state a result for tree quivers, that is, quivers whose underlying graphs contain no cycles.

\begin{theorem}
\label{thm:tree} Let $Q$ be a tree quiver, $\alpha$ a dimension vector, and $c_{W_1},c_{W_2}$ two irreducible semi-invariants on $\Rep(Q,\alpha)$ with $\underline{d}(W_i)_x\leq 1$, for all $x\in Q_0$. Then the hypersurface $Z(c_{W_i})$ has rational singularities. Moreover, if either $\alpha$ or $\underline{d}(W_1)+\underline{d}(W_2)$ is prehomogeneous, then for large enough $N$ the zero-set $Z(c_{W_1},c_{W_2})\subset \Rep(Q,N\cdot\alpha)$ has rational singularities.
\end{theorem}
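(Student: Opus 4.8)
The plan is to reduce everything to $b$-functions of several variables and the criteria already established. For the first (hypersurface) statement, note that $c_{W_i}$ is an irreducible semi-invariant, so $Z(c_{W_i})$ is an irreducible hypersurface; by Saito's criterion (the $r=1$ case of Theorem \ref{thm:ratio}), it suffices to show that $-1$ is the largest root of the one-variable Bernstein--Sato polynomial $b_{c_{W_i}}(s)$ and that it is simple. Here I would invoke the $b$-function computation of \cite{lol}: using reflection functors we may successively apply formula (\ref{eq:refl}) (in the one-variable case, where the $b$-function of several variables coincides with the Bernstein--Sato polynomial, cf. \cite{gyoja}) to walk $\underline{d}(W_i)$ toward a simple root, all the while tracking the roots. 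The key point is the hypothesis $\underline{d}(W_i)_x \leq 1$: since $Q$ is a tree, a real Schur root supported with multiplicity at most $1$ at every vertex corresponds to a representation whose semi-invariant has a particularly tame $b$-function — the factors introduced by (\ref{eq:refl}) are of the form $[s]^{1}_{a,b}$, i.e. products of \emph{distinct} linear terms $s+i$, so no root ever acquires multiplicity $>1$ and the largest root stays at $-1$. Concretely, for a tree quiver one can use the combinatorial description of these semi-invariants (they are, up to sign, products of entries of the maps along paths, as in \cite{chang, scho}) to see that $b_{c_{W_i}}(s) = s+1$ or at worst a product $\prod_i (s+i)$ with the top root simple; either way Theorem \ref{thm:ratio} applies.

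For the second statement, suppose first that $\alpha$ is prehomogeneous. Then for large $N$, $N\cdot\alpha$ is prehomogeneous with the \emph{same} perpendicular simples, and by Theorem \ref{thm:reduc} (or Theorem \ref{thm:dynk} in the Dynkin case, and \cite{zwara1, zwara2} in general) the zero set $Z(c_{W_1}, c_{W_2})$ — which is a union of some of the coordinate zero sets cut out by a subset of the fundamental semi-invariants $c_{S_j}$ — is a reduced complete intersection of codimension $2$. By formula (\ref{eq:refl}) the $b$-function of several variables $b_{\underline{m}}(\underline{s})$ of the pair $(c_{W_1}, c_{W_2})$ has the shape (\ref{eq:ass}): the two ``self'' factors are $[s]^{\underline{e}^1}_{d_1}[s]^{\underline{e}^2}_{d_2}$ and the cross factors are $[s]^{\gamma^j}_{a_j,b_j}$ with $\gamma^j\in\nat^2$; moreover, because $\underline{d}(W_i)_x\leq 1$ for all $x$, the Coxeter-walk steps in (\ref{eq:refl}) only contribute factors $[s]^{\beta^i_x}_{\cdots}$ with $\beta^i_x\leq 1$, which forces each $\gamma^j$ to have small coordinates — in fact $\underline{e}\cdot\gamma^j = \gamma^j_1+\gamma^j_2 \leq 2$ — and one checks $\underline{e}\cdot\gamma^j\leq a_j$ exactly as in the Remark following (\ref{eq:ass}), using $\dim N\alpha \geq \beta^1+\beta^2$. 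I then want to apply the good-element criterion: by Proposition \ref{prop:link} it is enough to show every $z\in Z(\tilde B)$ is good, and here Lemma \ref{lem:reduc}(a) with $I=\{1,2\}$ does the job provided $(a_1+1)+(a_2+1) > 2$, i.e. $a_1+a_2 > 0$, which holds for $N$ large since $a_i$ grows with $N$. This reduces to the one-variable situations $z_i\leq -1$, which are handled by the hypersurface case above (evaluating $s_i$ at a negative integer as in Example \ref{ex:first}).

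If instead $\underline{d}(W_1)+\underline{d}(W_2)$ is prehomogeneous, the argument is the same after replacing $\alpha$ by a multiple of $\underline{d}(W_1)+\underline{d}(W_2)$: being prehomogeneous of the right shape guarantees (again via \cite{zwara1, mater} and Proposition \ref{prop:reduced}) that $Z(c_{W_1}, c_{W_2})\subset \Rep(Q, N\cdot\alpha)$ is a reduced complete intersection for large $N$, and since $c_{W_1}, c_{W_2}$ still have the multiplicity-$\leq 1$ support, the $b$-function has the form (\ref{eq:ass}) and the same good-element analysis applies.

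\textbf{Main obstacle.} The delicate point is verifying that the cross-factors $[s]^{\gamma^j}_{a_j,b_j}$ in the two-variable $b$-function genuinely satisfy $\underline{e}\cdot\gamma^j\leq a_j$ \emph{and} that Lemma \ref{lem:reduc}(a)/(b) can be iterated to conclude that \emph{every} element of $Z(\tilde B)$ is good — i.e. that there is no ``bad'' common root of $\tilde B$ with $\underline{e}\cdot z = -2$ other than $-\underline{e}$. This is where the tree hypothesis and the support condition $\underline{d}(W_i)_x\leq 1$ are really used, to keep the Coxeter orbit combinatorially controlled (no multiplicities creeping in along the reflection sequence), and it will require a careful but essentially bookkeeping-level induction on the length of the reflection sequence applying (\ref{eq:refl}), modeled on the computation in Example \ref{ex:first} and on \cite[Propositions 5.4, 5.7]{lol}.
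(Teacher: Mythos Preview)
Your overall strategy---compute the $b$-function(s), constrain the linear forms, then feed into Lemma~\ref{lem:reduc}(a) and Theorem~\ref{thm:ratio}---matches the paper's. But the specific tool you invoke is not the one the paper uses, and this matters.

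\textbf{Slice technique vs.\ Coxeter formula.} The paper computes the $b$-functions via the \emph{slice technique} \cite[Theorem~4.8]{lol}, not via the Coxeter recursion~(\ref{eq:refl}) $=$ \cite[Theorem~5.3]{lol}. The hypothesis ``$Q$ is a tree and $\underline{d}(W_i)_x\le 1$'' is tailor-made for slicing: one peels off boundary vertices one at a time, and the support bound guarantees that each step contributes only factors with $\underline d\in\{(1,0),(0,1),(1,1)\}$. By contrast, your plan to iterate~(\ref{eq:refl}) presupposes $c(\alpha)\in\nat^{Q_0}$ at every step, which you have not checked and which need not hold for an arbitrary tree quiver and dimension vector; this is a genuine gap in the hypersurface part. (Your aside that $c_{W_i}$ is ``a product of entries along paths'' is also incorrect unless $\alpha$ itself has all entries $\le 1$; for general $\alpha$ it is a determinant.)

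\textbf{Role of the prehomogeneity hypothesis.} In the two-semi-invariant part, the assumption that $\alpha$ or $\underline{d}(W_1)+\underline{d}(W_2)$ is prehomogeneous is used in the paper only to ensure \emph{multiplicity-freeness} (so that the several-variable $b$-function exists and has the shape~(\ref{eq:form})); it is not used to change the ambient representation space. Your second-case plan of ``replacing $\alpha$ by a multiple of $\underline{d}(W_1)+\underline{d}(W_2)$'' misreads the statement---the zero set is always taken inside $\Rep(Q,N\alpha)$.

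\textbf{Finishing via Lemma~\ref{lem:reduc}.} Once one knows $L\subset\{(1,0),(0,1),(1,1)\}$ from the slice computation, the paper simply applies Lemma~\ref{lem:reduc}(a) with $I=\{1\}$: the only $\gamma\in\Gamma_1$ is $(1,1)=\underline{e}$, so $u_1=1$ and one needs $a_j+1>2$, which holds for large $N$ since the $a_j,b_j$ grow linearly. Your choice $I=\{1,2\}$ also works but is more than necessary; the iterated reduction you flag as the ``main obstacle'' is in fact not needed once the slice technique gives the sharp constraint $\Gamma\subset\{(1,1)\}$ rather than merely $\underline{e}\cdot\gamma^j\le 2$.
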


\begin{proof}
The first part follows from computation of the $b$-function using the slice technique \cite[Theorem 4.8]{lol} and the fact that $c_{W_i}$ are irreducible.

For the second part, the assumptions imply multiplicity-freeness (see \cite{lol}) hence we can consider the $b$-function of $2$ variables $b_{\underline{m}}(\underline{s})$. Specializing $b_{(1,0)}(s_1,0)$ and $b_{(0,1)}(0,s_2)$, we see again by \cite[Theorem 4.8]{lol} that the set $L$ of all linear forms appearing in $b_{\underline{m}}(\underline{s})$ are from $L\subset\{(1,0),(0,1),(1,1)\}$ and that the constants $a_i,b_i$ from (\ref{eq:ass}) increase linearly in $N$. Hence we can apply Lemma \ref{lem:reduc} (a) with $I=\{1\}$, say, and for large enough $N$ we obtain that all elements in $Z(\tilde{B})$ are good.
\end{proof}

\begin{example}
\label{ex:pos}
Here we give an example where Theorem \ref{thm:tree} is not applicable and show that $\tilde{b}(s)$ can have positive roots already for $2$ semi-invariants. We continue Example \ref{ex:notred} with the dimension vector

\[\xymatrix{
& & 3n \ar[d] & & & &\\
2n \ar[r] & 4n \ar[r] & 7n & 4n \ar[l] & 3n \ar[l] & 2n \ar[l] &n \ar[l]
}\]

Let $S_1$ and $S_2$ be the indecomposables $\arraycolsep=1.5pt
\begin{array}{ccccccc}
 &  & 0 & & & & \\
0&0 & 1 &1&1&1&1 
\end{array}\,,\,
\begin{array}{ccccccc}
 &  & 1 & & & & \\
0&1 & 2 &1&1&1&0 
\end{array}$, respectively. The $b$-function of $2$ variables of  $c_{S_1},c_{S_2}$ is

$$b(\underline{s})=[s]^{01}_{4n} \cdot ([s]^{01}_{n,3n})^2 \cdot [s]^{01}_{2n,4n} \cdot [s]^{10}_n \cdot [s]^{11}_{n,4n}\cdot[s]^{12}_{4n,7n}$$

It is easy to see that for any $\underline{c}\in \integ^2$ with $c_1+c_2=1$, we have that either $s_1+2s_2+4n+1$ or $s_1+s_2-2n$ is a factor of $b_{\underline{c}}(\underline{s})$. Hence $(8n+1,-6n-1)\in Z(\tilde{B})$ and $2n$ is a root of $\tilde{b}(s)$.
\end{example}

\begin{remark}
We point out that most of the results in this section are valid over an arbitrary field $k$ of characteristic $0$. Consider the zero set $Z=Z(f_1,\dots,f_r)$ of some fundamental semi-invariants of a quiver $Q$ with a prehomogeneous dimension vector $\alpha$. Assume for simplicity that $Z$ is reduced, irreducible with rational singularities over $\complex$ (hence by Remark \ref{characteristic} $Z$ is irreducible, reduced over $k$). We claim that $Z$ has rational singularities also over $k$. One way we can see this as follows. Since fundamental semi-invariants have linearly independent weights, there exists a reductive group $G$, with $\SL(\alpha)\subset G\subset \GL(\alpha)$, such that $Z$ is the nullcone for the action of $G$ on $\Rep(Q,\alpha)$. By \cite{hesselink}, there exists a desingularization $Z'\to Z$, where $Z'$ is the total space of a vector bundle on a flag variety. We note that $Z'\to Z$ is defined already over $\rat$. Moreover, in this situation the property of rational singularities is equivalent to the vanishing of the cohomology of certain vector bundles on this flag variety (see \cite{jerzy}). This is independent of the field, as long as it is of characteristic $0$.
\end{remark}

\section*{Acknowledgement}
I would like to express my gratitude towards my advisor, Prof. Jerzy Weyman, for providing me many valuable insights.

\vspace{0.05in}

\bibliographystyle{amsplain}
\bibliography{biblo}

\providecommand{\bysame}{\leavevmode\hbox to3em{\hrulefill}\thinspace}
\providecommand{\MR}{\relax\ifhmode\unskip\space\fi MR }
\providecommand{\MRhref}[2]{%
  \href{http://www.ams.org/mathscinet-getitem?mr=#1}{#2}
}
\providecommand{\href}[2]{#2}
\begin{thebibliography}{10}

\bibitem{elements}
I.~Assem, D.~Simson, and A.~Skowro{\'n}ski, \emph{Elements of the
  representation theory of associative algebras, {V}ol. 1, {T}echniques of
  representation theory}, London Mathematical Society Student Texts, vol.~65,
  Cambridge University Press, Cambridge, 2006.

\bibitem{maxorb}
G.~Bobi\'{n}ski, \emph{Normality of maximal orbit closures for {E}uclidean
  quivers}, Canad. J. Math. \textbf{64} (2012), 1222--1247.

\bibitem{orb1}
G.~Bobi\'{n}ski and G.~Zwara, \emph{Normality of orbit closures for {D}ynkin
  quivers of type {$\mathbb{A}_n$}}, Manuscripta Math. \textbf{105} (2001),
  103--109.

\bibitem{orb2}
\bysame, \emph{Schubert varieties and representations of {D}ynkin quivers},
  Colloq. Math. \textbf{94} (2002), 285--309.

\bibitem{bong}
K.~Bongartz, \emph{Tilted algebras}, Representations of Algebras, Lecture Notes
  in Math., vol. 903, Springer-Verlag, New York, Berlin, 1981, pp.~26--39.

\bibitem{degext1}
\bysame, \emph{On degenerations and extensions of finite dimensional modules},
  Adv. Math. \textbf{121} (1996), 245--287.

\bibitem{bms}
N.~Budur, M.~Musta\c{t}\u{a}, and M.~Saito, \emph{Bernstein-{S}ato polynomials
  of arbitrary varieties}, Compos. Math. \textbf{142} (2006), 779--797.

\bibitem{chang}
C.~Chang and J.~Weyman, \emph{Representations of quivers with free module of
  covariants}, J. Pure Appl. Algebra \textbf{192} (2004), 69--94.

\bibitem{eisen}
D.~Eisenbud, \emph{Commutative algebra}, Grad. Texts in Math., vol. 150,
  Springer, New York, 1995.

\bibitem{gyoja}
A.~Gyoja, \emph{Theory of prehomogeneous vector spaces without regularity
  condition}, Publ. RIMS \textbf{27} (1991), 861--922.

\bibitem{gyoja2}
\bysame, \emph{Bernstein-{S}ato's polynomial on several analytic functions}, J.
  Math. Kyoto Univ. \textbf{33} (1993), 399--411.

\bibitem{hesselink}
W.~H. Hesselink, \emph{Desingularizations of varieties of nullforms}, Invent.
  Math. \textbf{55} (1979), 141--163.

\bibitem{kac}
V.~Kac, \emph{Infinite root systems, representations of graphs and invariant
  theory}, Invent. Math. \textbf{56} (1980), 57--92.

\bibitem{ryan}
R.~Kinser and J.~Rajchgot, \emph{Type {A} quiver loci and {S}chubert
  varieties}, \url{http://arxiv.org/pdf/1307.6261v2.pdf}.

\bibitem{kraft}
H.~Kraft and G.W. Schwarz, \emph{Representations with a reduced null cone},
  \url{http://arxiv.org/pdf/1112.3634v1.pdf}.

\bibitem{lol}
A.~C. L\H{o}rincz, \emph{The $b$-functions of quiver semi-invariants},
  \url{https://arxiv.org/abs/1310.3691}.

\bibitem{zwarahyper}
N.~Q. Loc and G.~Zwara, \emph{Modules and quiver representations whose orbit
  closures are hypersurfaces}, Colloq. Math. \textbf{134} (2014), no.~1,
  57--74.

\bibitem{mater}
S.~Materna, \emph{Quiver representations with an irreducible null cone}, J.
  Algebra \textbf{324} (2010), 2832--2859.

\bibitem{mater2}
\bysame, \emph{Perpendicular categories, null cones and dense orbits}, J.
  Algebra \textbf{376} (2013), 58--78.

\bibitem{openorb}
C.~Riedtmann and A.~Schofield, \emph{On open orbits and their complements}, J.
  Algebra \textbf{130} (1990), 388--411.

\bibitem{zwara1}
C.~Riedtmann and G.~Zwara, \emph{On the zero set of semi-invariants for
  quivers}, Ann. Sci. Ecole Norm. Sup. \textbf{36} (2003), 969--976.

\bibitem{zwara2}
\bysame, \emph{{O}n the zero set of semi-invariants for tame quivers}, Comment.
  Math. Helv. \textbf{79} (2004), 350--361.

\bibitem{rankscheme}
\bysame, \emph{Orbit closures and rank schemes}, Comment. Math. Helv.
  \textbf{88} (2013), 55--84.

\bibitem{quad}
C.~M. Ringel, \emph{Tame algebras and integral quadratic forms}, Lecture Notes
  in Mathematics, vol. 1099, Springer-Verlag, Berlin, 1984.

\bibitem{sab}
C.~Sabbah, \emph{Proximit\'{e} \'{e}vanescente, {I}. {L}a structure polaire
  d'un {D}-module}, Compositio Math. \textbf{62} (1987), 283--328.

\bibitem{saito}
M.~Saito, \emph{On b-function, spectrum and rational singularity}, Math. Ann.
  \textbf{295} (1993), 51--74.

\bibitem{sata}
M.~Sato, T.~Shintani, and M.~Muro, \emph{Theory of prehomogeneous vector spaces
  (algebraic part)}, Nagoya Math. J. \textbf{120} (1990), 1--34.

\bibitem{scho}
A.~Schofield, \emph{Semi-invariants of quivers}, J. London Math. Soc.
  \textbf{43} (1991), 385--395.

\bibitem{schofieldef}
\bysame, \emph{The field of definition of a representation of a quiver {$Q$}},
  Proc. Amer. Math. Soc. \textbf{116} (1992), no.~2, 293--295.

\bibitem{sugi}
K.~Sugiyama, \emph{b-{F}unctions associated with quivers of type {A}},
  Transform. Groups \textbf{16} (2011), 1183--1222.

\bibitem{ukai}
K.~Ukai, \emph{{b}-{F}unctions of prehomogeneous vector spaces of
  {D}ynkin-{K}ostant type for exceptional groups}, Compos. Math. \textbf{135}
  (2003), 49--101.

\bibitem{jerzy}
J.~Weyman, \emph{Cohomology of vector bundles and syzygies}, Cambridge Tracts
  in Mathematics, vol. 149, Cambridge University Press, Cambridge, 2003.

\end{thebibliography}

\end{document}